\newtheorem{thm}{Theorem}[section]
\newtheorem{lem}[thm]{Lemma}
\newtheorem{prop}[thm]{Proposition}
\theoremstyle{definition}
\newtheorem{defn}[thm]{Definition}
\newtheorem{rem}[thm]{Remark}
\newtheorem{Exa}[thm]{Example}
\numberwithin{equation}{section}
\begin{document}

\baselineskip=13pt


\title{On the automorphism groups of connected bipartite irreducible graphs }

\author{S.Morteza Mirafzal\\
Department of Mathematics \\
  Lorestan University, Khorramabad, Iran\\
\\
E-mail:smortezamirafzal@yahoo.com\\
E-mail: mirafzal.m@lu.ac.ir}

\date{}

\maketitle


\renewcommand{\thefootnote}{}

\footnote{2010 \emph{Mathematics Subject Classification}: 05C25}

\footnote{\emph{Keywords}:  automorphism group, bipartite double cover of a graph, Grassmann graph, stable graph, Johnson graph. }

\footnote{\emph{Date}: }

\renewcommand{\thefootnote}{\arabic{footnote}}
\setcounter{footnote}{0}
\date{}
\begin{abstract}
Let $G=(V,E)$ be a  graph with the vertex-set $V$ and  the edge-set $E$.
 Let $N(v)$ denote  the set of neighbors of the vertex $v$ of $G.$ 
 The graph  $G$ is  called    $ irreducible $      whenever for every 
     $v,w \in V$ if $v \neq w$,   then   $N(v)\neq N(w).$  In this paper, we present a method for finding   automorphism groups of connected bipartite irreducible graphs.  Then,   by our method, we 
 determine   automorphism groups of some classes of connected
 bipartite irreducible  graphs, including a class of graphs which are derived from Grassmann graphs. Let $a_0$ be a fixed positive integer.    We show that if $G$ is a connected non-bipartite irreducible   graph  such that 
   $c(v,w)=|N(v)\cap N(w)|=a_0$ when $v,w$ are adjacent, whereas $c(v,w) \neq a_0$, when $v,w$ are not adjacent, 
    then $G$ is a $stable$ graph, that is,  the automorphism group
 of the bipartite double cover  of $G$ is isomorphic with the group $Aut(G) \times \mathbb{Z}_2$.   Finally, we show that the Johnson graph $J(n,k)$ is a stable graph.
\end{abstract}
\maketitle

\section{Introduction} In this paper, a graph $G=(V,E)$ is
considered as an undirected simple finite graph,  where $V=V(G)$ is the vertex-set
and $E=E(G)$ is the edge-set. For   the terminology and notation
not defined here, we follow $[1,2,4,7]$.\

Let $G=(U \cup W,E), $ $U \cap W= \emptyset$ be a bipartite graph with parts $U$ and $W$. It is quite possible that   we wish to  construct    some other graphs which are related to  $G$ in some aspects. For instance  there are cases in which we can construct a graph $G_1=(U,E_1)$ such that we have $Aut(G) \cong Aut(G_1)$, where $Aut(X)$ is the automorphism group  of the graph $X$. For example note   the following cases.\

(i)  Let $ n \geq 3 $ be an integer and $ [n] = \{1,2,..., n \}$. Let $ k$ be an integer such that
$1\leq k <\frac{n}{2}$. The graph $B(n,k)$   introduced in [16] is a graph with the vertex-set  $V=\{v \  |  \ v \subset [n] ,  | v |  \in \{ k,k+1  \} \} $ and the
edge-set $ E= \{ \{ v , w \} \  | \  v , w \in V , v \subset w $ or $ w \subset v \} $. It is clear that the graph $B(n,k)$ is a bipartite graph with the vertex-set $V=V_1 \cup V_2$, where $V_1=\{ v \subset [n]  \  |  \  |v| =k \}$ and $V_2=\{ v \subset [n] \  | \  |v| =k+1 \}$. This graph has some interesting properties which have been  investigated recently [11,16,17,20]. Let $G=B(n,k)$ and let  $G_1=(V_1,E_1)$ be the Johnson graph $J(n,k)$ which can be constructed on the vertex-set $V_1$. It has been proved that if $n\neq 2k+1$, then $Aut(G) \cong Aut(G_1)$, and if $n=2k+1$, then $ Aut(G) \cong Aut(G_1) \times \mathbb{Z}_2$ [16].\

(ii) Let $n$ and $k$ be integers with  $n>2k,  k\geq1$. Let  $V$ be the set of all $k$-subsets
and $(n-k)$-subsets of $[n]$.
 The $bipartite\  Kneser\  graph$ $H(n, k)$ has
$V$ as its vertex-set, and two vertices $v,  w$ are adjacent if and only if $v \subset w$ or $w\subset v$. It is clear that $H(n, k)$ is a bipartite graph.
In fact,
if  $V_1=\{ v \subset [n] \   |  \  |v| =k \}$ and $V_2=\{ v \subset [n] \  | \  |v| =n-k \}$, then $\{ V_1, V_2\}$
is a partition of $V(H(n ,k))$ and every edge of $H(n, k)$ has a vertex in $V_1$ and a  vertex in $V_2$ and
$| V_1 |=| V_2 |$. Let $G=H(n,k)$ and let $G_1=(V_1,E_1)$ be the Johnson graph $J(n,k)$ which can be constructed on the vertex-set $V_1$. 
It has been proved that  $ Aut(G) \cong Aut(G_1) \times \mathbb{Z}_2$ [18].\

(iii) Let $n, k$ and $l$ be integers with $0 < k < l < n $. The $set$-$inclusion$ 
$graph$ $G(n, k, l)$ is the graph whose vertex-set consists of all $k$-subsets and $l$-subsets of $[n] $, where two distinct vertices are adjacent if one of them is contained
in another. It is clear that the graph $G(n, k, l)$ is a bipartite graph with the vertex-set $V=V_1 \cup V_2$, where $V_1=\{ v \subset [n] \   |  \  |v| =k \}$ and $V_2=\{ v \subset [n] \  | \  |v| =l \}$. It is easy to show that $G(n, k, l) \cong G(n,n-k,n-l)$, hence we assume that $k+l \leq n$. It is clear that if $l=k+1$, then $G(n, k, l)=B(n,k)$, where $B(n,k)$ is the graph which is defined in (i). Also,  if $l=n-k$,  then $G(n, k, l)=H(n,k)$,  where $H(n,k)$ is the graph which is introduced in (ii).  Let $G=G(n, k, l)$ and let  $G_1=(V_1,E_1)$ be the Johnson graph $J(n,k)$ which can be constructed on the vertex-set $V_1$. It has been proved  that if $n\neq k+l$, then $Aut(G) \cong Aut(G_1)$, and if $n=k+l$, then $ Aut(G) \cong Aut(G_1) \times \mathbb{Z}_2$ [9].\
 
Let $G=(V,E)$ be a graph. The $bipartite\  double\  cover$ of $G$ which we denote it  by $B(G)$  is a graph with the vertex-set $V \times \{ 0,1 \}$,     in which vertices $(v,a)$ and $(w,b)$ are adjacent if and only if 
 $a \neq b$ and $\{v,w\} \in E$. A graph $G$ is called $stable$ if and only if $Aut(B(G)) \cong Aut(G) \times \mathbb{Z}_2 $.\

In this paper,  we generalize the results of our examples to some other classes of bipartite graphs. In fact, we state some accessible conditions such that if for a bipartite graph $G=(V,E)=(U\cup W,E)$ these conditions hold, then we can determine the automorphism group of the graph $G$. Also,  we determine the automorphism group of a class of graphs which are derived from Grassmann graphs. In particular, we determine   automorphism groups   of   bipartite double covers  of some classes of graphs. In fact,  we show that if $G$ is a non-bipartite connected irreducible graph, and $a_0$ is a positive integer such that  
$c(v,w)=|N(v)\cap N(w)|=a_0$, when $v$ and $w$ are adjacent, whereas $c(v,w) \neq a_0 $ when $v$ and $w$ are not adjacent,  
   then  the graph $G$ is a stable graph. Finally, we show that Johnson graphs are stable graphs. 

\section{Preliminaries}
The graphs $G_1 = (V_1,E_1)$ and $G_2 =
(V_2,E_2)$ are called $isomorphic$, if there is a bijection $\alpha
: V_1 \longrightarrow V_2 $   such that  $\{a,b\} \in E_1$ if and
only if $\{\alpha(a),\alpha(b)\} \in E_2$ for all $a,b \in V_1$.
In such a case the bijection $\alpha$ is called an $isomorphism$.
An $automorphism$ of a graph $G $ is an isomorphism of $G
$ with itself. The set of automorphisms of $\Gamma$  with the
operation of composition of functions is a group  called the
$automorphism\  group$ of $G$ and denoted by $ Aut(G)$.

 The
group of all permutations of a set $V$ is denoted by $Sym(V)$  or
just $Sym(n)$ when $|V| =n $. A $permutation$ $group$ $\Gamma$ on
$V$ is a subgroup of $Sym(V).$  In this case we say that $\Gamma$ $acts$
on $V$. If $\Gamma$ acts on $V$ we say that $\Gamma$ is
$transitive$ on $V$ (or $\Gamma$ acts $transitively$ on $V$), when there is just
one orbit. This means that given any two elements $u$ and $v$ of
$V$, there is an element $ \beta $ of  $G$ such that  $\beta (u)= v
$.  If $X$ is a graph with vertex-set $V$  then we can view
each automorphism of $X$ as a permutation on $V$  and so $Aut(X) = \Gamma$ is a
permutation group on $V$.

A graph $G$ is called $vertex$-$transitive$ if  $Aut(G)$
acts transitively on $V(\Gamma)$. We say that $G$ is $edge$-$transitive$ if the group $Aut(G)$ acts transitively  on the edge set $E$, namely, for any $\{x, y\} ,   \{v, w\} \in E(G)$, there is some $\pi$ in $Aut(G)$,  such that $\pi(\{x, y\}) = \{v, w\}$.  We say that $G$ is $symmetric$ (or $arc$-$transitive$) if  for all vertices $u, v, x, y$ of $G$ such that $u$ and $v$ are adjacent, and also, $x$ and $y$ are adjacent, there is an automorphism $\pi$ in $Aut(G)$ such that $\pi(u)=x$ and $\pi(v)=y$. We say that $G$ is $distance$-$transitive$ if  for all vertices $u, v, x, y$ of $G$ such that $d(u, v)=d(x, y)$, where $d(u, v)$ denotes the distance between the vertices $u$ and $v$  in $G$,  there is an automorphism $\pi$ in $Aut(\Gamma)$ such that  $\pi(u)=x$ and $\pi(v)=y.$\\
Let  $n,k \in \mathbb{ N}$ with $ k < n,   $ and let $[n]=\{1,...,n\}$. The $Johnson\  graph$ $J(n,k)$ is defined as the graph whose vertex set is $V=\{v\mid v\subseteq [n], |v|=k\}$ and two vertices $v$,$w  $ are adjacent if and only if $|v\cap w|=k-1$.   The Johnson\ graph $J(n,k)$ is a  distance-transitive graph  [2].   It is   easy   to show that the set  $H= \{ f_\theta \ | \  \theta \in$ S$ym([n]) \} $,  $f_\theta (\{x_1, ..., x_k \}) = \{ \theta (x_1), ..., \theta (x_k) \} $,    is a subgroup of $ Aut( J(n,k) ). $   It has been shown that   $Aut(J(n,k)) \cong$ S$ym([n])$, if  $ n\neq 2k, $  and $Aut(J(n,k)) \cong$ S$ym([n]) \times \mathbb{Z}_2$, if $ n=2k$,   where $\mathbb{Z}_2$ is the cyclic group of order 2 [10,19].

The group $\Gamma$ is called a semidirect product of $ N $ by $Q$,
denoted by $ \Gamma=N \rtimes Q $,
 if $\Gamma$ contains subgroups $ N $ and $ Q $ such that:  (i)$
N \unlhd \Gamma $ ($N$ is a normal subgroup of $\Gamma$); (ii) $ NQ = \Gamma $; and
(iii) $N \cap Q =1 $. \

Although 
in most situations  it is difficult  to determine the automorphism group
of a graph $G$,  there are various papers in the literature   dealing with this,   and some of the recent works
include [5,6,10,14,15,16,18,19,24].

\section{Main Results}

The proof of the  following  lemma  is easy but its result  is necessary for proving  the results of  our work.

\begin{lem}
Let $G= (U \cup W,E)$, $U \cap W=\emptyset $ be a connected bipartite graph.  If $f$ is an automorphism of the graph $G$,  then $ f(U)=U$ and $f(W) =W$,  or $ f(U) = W $ and $f(W) = U$.

\end{lem}

\begin{proof}
Automorphisms of $G$ preserve
distance between vertices and since two vertices are in the same part if and
only if they are at even distance from each other, the result follows.

\end{proof}

 We have  the following   definition due to Sabidussi [22].

\begin{defn}
Let $G=(V,E)$ be a  graph with the vertex-set $V$ and  the edge-set $E$. Let $N(v)$ denote the set of neighbors of the vertex $v$ of $G.$  We say that $G$ is an  irreducible  graph  if for every pare of distinct vertices $x,y \in V$ we have $N(x)\neq N(y)$.
\end{defn}
From Definition 3.2,  it follows that the cycle $C_n$, $n\neq 4$,  is  irreducible, but the complete bipartite graph $K_{m,n}$ is not    irreducible, when $(m,n) \neq (1,1)$.

\begin{lem} 
Let $G= (U \cup W,E)$, $U \cap W=\emptyset $ be a bipartite irreducible  graph. If $f$ is an automorphism of $ G$ such that $f(u)=u$ for every $u\in U$, then $f$ is the identity automorphism of $ G$.

\end{lem}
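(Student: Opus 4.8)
The plan is to show that $f$ fixes every vertex of $W$ as well, which together with the hypothesis $f|_U = \mathrm{id}_U$ yields $f = \mathrm{id}_{V(G)}$. First I would record that $f(W) = W$: since $f$ is a permutation of $V(G) = U \cup W$ whose restriction to $U$ is the identity, we have $f(U) = U$, and as $f$ is a bijection, $f(W) = f(V(G)) \setminus f(U) = V(G) \setminus U = W$. (If $G$ is additionally assumed connected this also follows from Lemma 3.1, but the counting argument needs no connectivity hypothesis.) In particular $f(w) \in W$ for every $w \in W$.

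Now fix an arbitrary $w \in W$. Since $G$ is bipartite with parts $U$ and $W$, every neighbour of $w$ lies in $U$, that is, $N(w) \subseteq U$. Because $f \in Aut(G)$ we have $f(N(w)) = N(f(w))$, and because $f$ fixes $U$ pointwise and $N(w) \subseteq U$ we have $f(N(w)) = N(w)$; hence $N(f(w)) = N(w)$. Since $G$ is a $vd$-graph, two vertices with the same neighbour set must coincide, so $f(w) = w$. As $w \in W$ was arbitrary, $f$ fixes $W$ pointwise too, and therefore $f$ is the identity automorphism of $G$.

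I do not anticipate a genuine obstacle, since the statement is elementary; the one point that must be stated carefully is the first step, ruling out $f(w) \in U$ for some $w \in W$, which is immediate from the injectivity of $f$ together with $f|_U = \mathrm{id}_U$. It is worth emphasising that the $vd$ hypothesis is used in an essential way: it is precisely what upgrades $N(f(w)) = N(w)$ to $f(w) = w$, and it also disposes uniformly of the degenerate case of an isolated vertex of $W$, for which $N(w) = \emptyset$. Dropping the hypothesis, the statement becomes false: for instance $K_{m,n}$ with $n \ge 2$ admits non-identity automorphisms fixing the part of size $m$ pointwise.
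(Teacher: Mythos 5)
Your proof is correct and follows essentially the same route as the paper's: for each $w\in W$ one has $N(f(w))=f(N(w))=N(w)$ because $f$ fixes $U\supseteq N(w)$ pointwise, and the $vd$ hypothesis then forces $f(w)=w$. The only addition is your preliminary verification that $f(W)=W$, which the paper omits (and which is not strictly needed, since the $vd$ condition applies to any pair of distinct vertices); otherwise the arguments coincide.
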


\begin{proof}
Let $ w\in W$ be an arbitrary vertex. Since $f$ is an automorphism of the graph $G$, then for the set  $N(w)= \{ u |  u\in U, u \leftrightarrow w \}$, we have $f(N(w))= \{ f(u) |  u\in U, u \leftrightarrow w \}=N(f(w))$. On the other hand, since for every $u\in U$, $f(u)=u$, then we have 
$f(N(w))=N(w) $, and therefore $N(f(w))=N(w) $.  Now since $G$ is an irreducible   graph we must have $f(w)=w$. 
 Therefore, for every vertex $x$ in $V(G)$  we have $f(x)=x$ and thus $f$ is the identity automorphism of the graph $G$.

\end{proof}

Let $G= (U \cup W,E)$, $U \cap W=\emptyset $ be a bipartite graph. We can construct various graphs on the set $U$. We show that some of these graphs can help us in finding the automorphism group of the graph $G$. 

\begin{defn}
Let $G= (U \cup W,E)$, $U \cap W=\emptyset $  be a bipartite graph. Let $G_1=(U,E_1)$ be   a  graph with the vertex-set $U  $ such that the following conditions  hold:  \\
(i) every automorphism of the graph $G_1$ can be uniquely extended to an automorphism of  the graph $G$.  
In other words, if $f$ is an automorphism of the graph $G_1$,  then there is a unique  automorphism $e_f$ in the automorphism group of  $G$ such that ${(e_f)|}_U=f$, where ${(e_f)|}_U$ is the
 restriction of the automorphism $ e_f $ to the set $U$.\\
(ii) If $f \in Aut(G)$ is such that $f(U)=U$, then the restriction of $f$ to $U$ is an automorphism of the graph $G_1$. In other words,   if  $f \in Aut(G)$ is such that $f(U)=U$  then $f|_U \in Aut(G_1).$\\
 When such a graph $G_1$ exists, then  we say that the graph $G_1$ is a faithful representation  of $G$.

\end{defn}

\begin{rem} 
 Let $G= (U \cup W,E)$, $U \cap W=\emptyset $ be a bipartite irreducible graph, and $G_1=(U,E_1)$ be a graph. If $f \in Aut(G_1)$ can be extended to an automorphism $g$ of the graph $G$, then $g$ is unique. In fact if $g$ and $h$ are extensions of the automorphism $f \in Aut(G_1)$ to automorphisms of $G$, then $i=gh^{-1}$ is an automorphism of the graph $G$ such that the restriction of $i$ to the set $U$ is the identity automorphism. Hence by Lemma 3.3, the automorphism $i$  is the identity automorphism of the graph $G$, and therefore $g=h$. Hence, according to Definition 3.4,  the graph $G_1$ is a  faithful representation of    the graph $G$  if and only if every automorphism of $G_1$ can be extended to an automorphism of $G$ and every automorphism of $G$ which fixes $U$ setwise is an automorphism of $G_1$.
\end{rem} 

\begin{Exa}
Let $G=H(n,k)=(V_1 \cup V_2, E)$ be the bipartite Kneser
 graph which 
is introduced in (ii) of  the introduction of the present paper. Let $G_1=(V_1,E_1)$ be the Johnson graph which can be constructed on the vertex $V_1$.  
It can be shown that the 
 graph $G_1$ is a faithful representation of $G$ [18].

\end{Exa}

In the next theorem, we show that if 
 $G=(U \cup W,E)$,   $U \cap W= \emptyset $  is a connected  bipartite
  irreducible   graph  with  $G_1=(U,E_1)$ as   
a faithful representation of  $G$, then we can 
determine the automorphism group of the graph $G$, 
provided   the automorphism group of the graph  $G_1 $ has been
 determined.

Let $G=(U \cup W,E)$,    $U \cap W= \emptyset$, 
 be a connected
  bipartite   irreducible    graph   such that  $G_1=(U,E_1)$ is  a 
 faithful representation  of  $G$. 
If $f\in Aut(G_1)$ then we let $e_f$ be its unique   extension  to $Aut(G).$ 
It is easy to see that $E_{G_1}=\{e_f | f \in Aut(G_1)  \}$,  with the operation of composition, 
 is a group. 
 Moreover, it is easy to see that $E_{G_1}$ and $Aut(G_1)$ are isomorphic (as abstract groups). \\
For the bipartite graph $G=(U \cup W,E)$ we let
$S(U)= \{ f \in Aut(G) |  f(U)=U  \}$=${Aut(G)}_U$, the stabilizer subgroup of the set $U$ in the group $Aut(G)$. The next proposition shows that when $G_1=(U,E_1)$ is a   faithful representation  of $G$, then $S(U)$ is a   familiar group.

\begin{prop}Let $G=(U \cup W,E)$,    $U \cap W= \emptyset$  be a connected bipartite irreducible  graph  such that $G_1=(U,E_1)$ is   a  faithful representation  of   $G$. Then  $S(U) \cong Aut(G_1)$, where $S(U)= \{ f \in Aut(G) | f(U)=U  \}.$

\end{prop}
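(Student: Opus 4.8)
The plan is to exhibit an explicit isomorphism between $S(U)$ and $Aut(G_1)$, namely the restriction map $\psi\colon S(U)\to Aut(G_1)$ given by $\psi(f)=f|_U$. First I would check that $\psi$ is well-defined: this is exactly condition (ii) in Definition 3.5, which guarantees that the restriction to $U$ of any automorphism of $G$ fixing $U$ set-wise lies in $Aut(G_1)$. That $\psi$ is a group homomorphism is immediate, since for $f,g\in S(U)$ both $f$ and $g$ permute $U$, so $(fg)|_U=(f|_U)\,(g|_U)$.

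Next I would establish injectivity. If $f\in S(U)$ lies in the kernel of $\psi$, then $f|_U$ is the identity map on $U$, i.e. $f(u)=u$ for every $u\in U$. Since $G$ is a bipartite $vd$-graph, Lemma 3.4 applies and forces $f$ to be the identity automorphism of $G$; hence $\ker\psi$ is trivial. For surjectivity, take any $h\in Aut(G_1)$. By condition (i) of Definition 3.5 there is an automorphism $e_h\in Aut(G)$ with $(e_h)|_U=h$; since $h$ is a bijection of $U$ onto itself, $e_h(U)=U$, so $e_h\in S(U)$ and $\psi(e_h)=h$. Thus $\psi$ is onto, and therefore an isomorphism.

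Equivalently, one can phrase the whole argument in terms of the group $E_{G_1}=\{e_f\mid f\in Aut(G_1)\}$ introduced just before the proposition: the inclusion $E_{G_1}\subseteq S(U)$ is clear because each $e_f$ satisfies $e_f(U)=U$, and conversely if $h\in S(U)$ then $h$ is an extension of $h|_U\in Aut(G_1)$, so by the uniqueness of extensions noted in Remark 3.6 we get $h=e_{h|_U}\in E_{G_1}$; since $E_{G_1}\cong Aut(G_1)$ this gives $S(U)\cong Aut(G_1)$.

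I do not expect a genuine obstacle here: the statement is essentially a bookkeeping consequence of Definition 3.5 together with Lemma 3.4. The only point that needs care is to make sure each hypothesis is actually used — well-definedness of $\psi$ rests on (ii), surjectivity on (i), and injectivity on the $vd$-hypothesis via Lemma 3.4 — so that the proof does not silently overclaim.
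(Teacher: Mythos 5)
Your proof is correct and takes essentially the same route as the paper: the paper establishes the two inclusions $E_{G_1}\leq S(U)$ (via condition (i)) and $S(U)\leq E_{G_1}$ (via condition (ii) plus the uniqueness from Remark 3.6, i.e.\ Lemma 3.4), which is exactly your second formulation, and your primary phrasing via the restriction homomorphism $\psi$ is just a repackaging of the same ingredients.
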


\begin{proof}Let $f$ be an automorphism of the graph
$G_1$. Then by definition of the graph $G_1$ we deduce that $ e_f $ is an automorphism of the graph $G$ such that $ e_f(U)=U $.  Hence, we have $E_{G_1} \leq S(U), $ where $E_{G_1}$ is the group which is defined preceding   this theorem. \\
On the other hand, if $g \in S(U)$, then $g(U)=U$. Thus by the definition of the graph $G_1$,  the restriction of $g$ to $U$ is an automorphism of the graph $G_1$. In other words,    $h=g|_{U} \in Aut(G_1)$. Therefore,  by Definition 3.4,  there is an automorphism $e_h$ of the graph $G$ such that $e_h(u)=g(u)$ for every $u \in U.$ Now by Remark 3.5,  we deduce that $g=e_h \in E_{G_1}$. Hence we have $ S(U) \leq E_{G_1}.$ We now deduce  that $S(U)=E_{G_1}.$ Now, since $E_{G_1} \cong Aut(G_1)$, we conclude that $S(U) \cong Aut(G_1).$

\end{proof}
Let $G=(U \cup W,E)$, $U \cap W= \emptyset$    be a connected bipartite graph.
It is quite possible that   $f(U)=U, $ for every automorphism of the graph $G.$  For example if $|U| \neq |W|$, or $U$ contains a vertex of degree $d$, but $W$ does not contain a vertex of degree $d$, then we have $f(U)=U  $ for every automorphism $f$ of the graph $G.$ In such a case we have $Aut(G)=S(U)$, and hence by Proposition 3.7,   we have the following theorem. 

\begin{thm}Let $G=(U \cup W,E)$,   $U \cap W= \emptyset$ be a connected bipartite irreducible  graph such that $G_1=(U,E_1)$ is  a  faithful representation  of   $G$.
 If $Aut(G)=S(U)$,   then  $ Aut(G) \cong Aut(G_1)$. 

\end{thm}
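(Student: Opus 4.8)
The plan is to derive this theorem as an immediate consequence of Proposition 3.8. By hypothesis $G=(U\cup W,E)$ is a connected bipartite $vd$-graph and $G_1=(U,E_1)$ is an attached graph to $G$, so all the hypotheses of Proposition 3.8 are met, and that proposition already gives $S(U)\cong Aut(G_1)$, where $S(U)=\{f\in Aut(G)\mid f(U)=U\}$. It then remains only to observe that the additional hypothesis $Aut(G)=S(U)$ lets us replace $S(U)$ by $Aut(G)$ in this isomorphism, yielding $Aut(G)\cong Aut(G_1)$.

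In more detail, I would first recall (or re-use) the content behind Proposition 3.8: the extension map $f\mapsto e_f$ from $Aut(G_1)$ to $Aut(G)$ is well defined because of Remark 3.6 (uniqueness of the extension, which rests on Lemma 3.4 for $vd$-graphs), it has image exactly $E_{G_1}$, and $E_{G_1}=S(U)$. The inverse map is the restriction $g\mapsto g|_U$, which lands in $Aut(G_1)$ precisely because $G_1$ is an attached graph (condition (ii) of Definition 3.5). These two maps are mutually inverse group homomorphisms, so $S(U)\cong Aut(G_1)$ as abstract groups. I would not re-prove this — I would simply cite Proposition 3.8.

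The only genuinely new step is the substitution $Aut(G)=S(U)$, which is given. So there is no real obstacle internal to this proof; the work has all been done in Lemma 3.4, Remark 3.6 and Proposition 3.8. If I were to flag anything, it is that the condition $Aut(G)=S(U)$ is a nontrivial hypothesis in applications — one typically verifies it via the kind of invariant-counting argument mentioned just before the theorem (e.g. $|U|\neq|W|$, or $U$ and $W$ realize different degree sets), so that no automorphism of $G$ can swap the two parts (cf.\ Corollary 3.2). But for the theorem as stated, this is assumed, and the conclusion follows in one line.

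\begin{proof}
Since $G$ is a connected bipartite $vd$-graph and $G_1=(U,E_1)$ is an attached graph to $G$, Proposition 3.8 yields $S(U)\cong Aut(G_1)$. By hypothesis $Aut(G)=S(U)$, hence $Aut(G)\cong Aut(G_1)$.
\end{proof}
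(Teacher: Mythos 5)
Your proposal is correct and matches the paper exactly: the paper states this theorem as an immediate consequence of Proposition 3.8 together with the hypothesis $Aut(G)=S(U)$, which is precisely your one-line substitution argument.
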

Let $G=(U \cup W,E)$,   $U \cap W= \emptyset$ be a connected bipartite irreducible   graph.
 Concerning the automorphism group of $G$, we can  say even more if $|U|=|W|.$ When 
$|U|=|W|$ then there is a bijection $\theta : U \rightarrow W.$ Then ${\theta}^{-1}\cup \theta= t$  is a permutation on the vertex-set of the  graph $G$ such that $t(U)=W$ and $t(W)=U$.    In the following theorem, we show that if the graph $G$ has   a  faithful representation   $G_1=(U,E_1)$,  and if such a permutation $t$   is an automorphism of the graph $G$,  then the automorphism group of the graph $G$ is a familiar group.

  \begin{thm}Let $G=(U \cup W,E)$, $U \cap W= \emptyset$ be a connected  bipartite irreducible  graph  such that $G_1=(U,E_1)$ is  a  faithful representation  of     $G$ and $|U|=|W|$. Suppose that  there is an automorphism $t$ of the graph $G$ such that
  $t(U)=W$. Then $Aut(G)=Aut(G_1)\rtimes H$  where  $H=\langle t \rangle $ is  the  subgroup generated by  $t$ in the group $Aut(G)$.

  \end{thm}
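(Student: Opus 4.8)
The plan is to exhibit $Aut(G)$ as an internal semidirect product $E_{G_1}\rtimes H$, where $E_{G_1}=\{e_f : f\in Aut(G_1)\}\cong Aut(G_1)$ is the group of extensions described before Proposition 3.8, and $H=\langle t\rangle$. First I would verify that $H$ has order $2$: since $t(U)=W$ and $t(W)=U$, the automorphism $t^2$ satisfies $t^2(U)=U$, so $t^2\in S(U)$; but more directly, one wants $t$ itself to be an involution, or at least to replace $t$ by an involution. Here I would be careful — the hypothesis only gives \emph{some} automorphism $t$ with $t(U)=W$, not that $t^2=\mathrm{id}$. The clean way around this is to note that $t^2\in S(U)=E_{G_1}$ by Proposition 3.8, so $H=\langle t\rangle$ meets $E_{G_1}$ in $\langle t^2\rangle$ only if $t^2\neq\mathrm{id}$, which would break the $N\cap Q=1$ condition. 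So the real first step is to produce an involution swapping $U$ and $W$: given $t$, consider the coset $t\,S(U)$; I expect that one can choose a representative that is an involution, or alternatively the statement is implicitly using that the exhibited $t$ (in the applications) is an involution. I would state and use: there is an involution, still called $t$, with $t(U)=W$.

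Granting that $t^2=\mathrm{id}$, the three semidirect-product axioms are then checked as follows. (ii) $E_{G_1}H=Aut(G)$: let $g\in Aut(G)$. By Corollary 3.2, either $g(U)=U$ or $g(U)=W$. In the first case $g\in S(U)=E_{G_1}$ by Proposition 3.8. In the second case $tg$ (or $gt$) fixes $U$ set-wise, hence lies in $S(U)=E_{G_1}$, so $g\in tE_{G_1}\subseteq HE_{G_1}$; using $|H|=2$ and that everything is a group, $HE_{G_1}=E_{G_1}H=Aut(G)$. (iii) $E_{G_1}\cap H=1$: the nonidentity element of $H$ is $t$, which sends $U$ to $W\neq U$, while every element of $E_{G_1}=S(U)$ fixes $U$ set-wise; hence the intersection is trivial. (i) $E_{G_1}\unlhd Aut(G)$: this is the one genuine point to argue. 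Since $[Aut(G):S(U)]\le 2$ (the two cosets being $S(U)$ and $tS(U)$, corresponding to the two possibilities in Corollary 3.2), $S(U)=E_{G_1}$ is a subgroup of index at most $2$, and a subgroup of index $2$ is always normal; if the index is $1$ then $E_{G_1}=Aut(G)$ is trivially normal (and then, incidentally, $t\in S(U)$ forces $U=W$, excluded, so the index is exactly $2$). With (i)–(iii) in hand, $Aut(G)=E_{G_1}\rtimes H$, and since $E_{G_1}\cong Aut(G_1)$ this is the claimed $Aut(G)\cong Aut(G_1)\rtimes H$.

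The main obstacle, as flagged above, is the passage from an arbitrary automorphism $t$ with $t(U)=W$ to a \emph{subgroup} $H=\langle t\rangle$ meeting $S(U)$ trivially; literally as stated, $\langle t\rangle$ need not have order $2$ and need not intersect $E_{G_1}$ trivially. I would resolve this by replacing $t$ with a suitable involution in its $S(U)$-coset, or by reading the theorem as asserting the result for the specific involutions $t$ that arise in the applications (as in cases (i)–(iii) of the introduction, where $t$ is the natural complementation-type map). Everything else — the index-$2$ normality of $S(U)$, the factorization via Corollary 3.2 and Proposition 3.8, and the isomorphism $E_{G_1}\cong Aut(G_1)$ — is routine group theory combined with the structural facts already established.
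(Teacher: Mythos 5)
Your proposal is correct and follows essentially the same route as the paper: identify $S(U)=E_{G_1}\cong Aut(G_1)$ via Proposition 3.8, use Lemma 3.1/Corollary 3.2 to get the factorization $Aut(G)=\langle S(U),t\rangle$, and establish normality of $S(U)$ (the paper does this by the direct computation $(t^{-1}ft)(U)=U$, you by the index-$2$ observation; the two are interchangeable). The one point where you go beyond the paper is the condition $S(U)\cap\langle t\rangle=1$: the paper's proof simply writes $\langle t,S(U)\rangle=S(U)\rtimes\langle t\rangle$ without ever checking this, so the worry you raise is genuine --- with $t$ an arbitrary automorphism interchanging $U$ and $W$ one only knows $t^{2}\in S(U)$, and if $t^{2}\neq\mathrm{id}$ then $\langle t\rangle\cap S(U)\neq 1$, so $\langle t\rangle$ is not a complement and the stated internal decomposition fails for that $t$. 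Be aware, however, that your first suggested repair does not work in general: a coset of an index-$2$ subgroup need not contain an involution (the nontrivial coset of the order-$2$ subgroup of $\mathbb{Z}_4$ is the standard example), so one cannot always replace $t$ by an involution in $tS(U)$. The honest reading is your second alternative: take the theorem with the implicit extra hypothesis $t^{2}=\mathrm{id}$, which indeed holds for every $t$ used in the paper's applications (complementation $v\mapsto v^{c}$ for $G(n,k,n-k)$, the map $v\mapsto v^{\perp}$ for $S(q,2k+1,k)$, and the swap $(v,i)\mapsto(v,i^{c})$ for bipartite doubles); with that hypothesis your verification of the three semidirect-product axioms is complete and matches the paper's argument.
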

  
   \begin{proof}   Let  $S(U)= \{ f \in Aut(G) |  f(U)=U  \}.$ It is clear  that $S(U)$ is a subgroup of $Aut(G).$ Let $g \in Aut(G)$
be such that $g(U) \neq U$.  Then by Lemma 3.1,  we have $g(U)=W$, and hence $tg(U)=t(W)=U$. Therefore, 
$tg \in S(U)$, and hence there is an element $h \in S(U)$ such that $tg=h$. Thus, $g=t^{-1}h \in \langle t, S(U)\rangle$, where $\langle t, S(U)\rangle=K$ is the subgroup of $Aut(G)$ which is generated by $t$ and $S(U)$.
It  follows that $Aut(G) \leq K$. Since $K \leq Aut(G)$,    we deduce that $K=Aut(G)$.
If $f$ is an  arbitrary element in the subgroup $S(U)$ of $K$, then we have $(t^{-1}ft)(U)=(t^{-1}f)(W)= t^{-1}(f(W))= (t^{-1})(W)=U$, hence $t^{-1}ft \in S(U).$ 
 We now deduce that $S(U)$ is a normal subgroup of the group $K$. Therefore $K=\langle t, S(U)\rangle=S(U)\rtimes \langle t \rangle=S(U) \rtimes H, $ where
$H=\langle t \rangle$. We have seen in Proposition 3.8. that   $S(U) \cong Aut(G_1)$, and hence we conclude that 
 $K=Aut(G)\cong  Aut(G_1) \rtimes H$.
               
\end{proof}                  
 In  the sequel, we will see how Theorem 3.8,  and Theorem 3.9,  can help us in determining the automorphism groups of some classes of bipartite graphs.
 
 $${\bf Some\  Applications}$$\
 Let $G=(U \cup W)=G(n,k,l)$ be  the bipartite graph which is defined in (iii) of the introduction of the present paper. Then  $U=\{ v \subset [n] \   |  \  |v| =k \}$ and $W=\{ v \subset [n] \  | \  |v| =l \}$.
 It is easy to show that $G$ is  connected and irreducible. Let $G_1=(U,E_1)$ be the Johnson graph which can be constructed on the  set $U$. By a proof exactly similar to what   appeared in [16,18] and later  [9],  it can be shown that $G_1$ is a  faithful representation of   $G$. We know that $Aut(G_1)=H=\{f_{\theta} | \theta \in Sym([n])$\}, where $ f_{\theta}(v)=\{ \theta(x)| x\in v \} $, for every  $v\in U$.  Because $k<l$ and  $k+l\leq n$ imply that $k<\frac{n}{2}$.
      When $k+l=n$,  then the mapping $t: V(G) \rightarrow V(G)$, defined by the rule $t(v)=v^c$, where $v^c$ is the complement of the set $v$ in the set $[n]= \{1,2,3,...,n  \}$,  is an automorphism of $G$. It is clear that  $t(U)=W $ and $t(W)=U$. Moreover,  $t$ is of order 2,  and  hence  $\langle t \rangle \cong \mathbb{Z}_2$. It is easy to   show that if $f \in H$,  then  $ft=tf$ [16,19].   Now, from Theorem 3.8,  and Theorem 3.9,  we   obtain the following theorem which has been given in [9].
 
\begin{thm}

Let $n,k$ and $l$ be integers with  $1 \leq k < l \leq n-1$ and $G=G(n,k,l)$. If $n \neq k+l$, then $Aut(G) \cong$  $Sym([n]) $, and if $n=k+l, $ 
then   $Aut(G)=H \rtimes \langle t \rangle   \cong  H \times \langle t \rangle   \cong Sym([n]) \times \mathbb{Z}_2$,  where $H$ and  $t$ are the group and automorphism which are defined preceding  this theorem.

\end{thm}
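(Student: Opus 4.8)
The plan is to recognise $G = G(n,k,l)$ as a bipartite $vd$-graph of the type handled by Theorems 3.9 and 3.10, with $U = \{v \subseteq [n] : |v| = k\}$, $W = \{v \subseteq [n] : |v| = l\}$, and with the Johnson graph $G_1 = J(n,k)$ on $U$ as attached graph. Using $G(n,k,l) \cong G(n, n-k, n-l)$ we may assume $k + l \le n$; together with $k < l$ this forces $n > 2k$, so by the result recalled in Section 2 we have $Aut(G_1) = H := \{f_\theta : \theta \in Sym([n])\} \cong Sym([n])$, where $f_\theta(v) = \{\theta(x) : x \in v\}$.

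First I would record the routine facts that $G$ is connected (every $l$-set contains a $k$-set, and since $l \ge k+1$ any two Johnson-adjacent $k$-sets lie in a common $l$-set, so the distance-$2$ graph on $U$ contains the connected graph $J(n,k)$) and that $G$ is a $vd$-graph (for distinct $k$-sets $v \ne v'$ pick $x \in v' \setminus v$; since $n \ge l + 1$ there is an $l$-set containing $v$ and avoiding $x$, so $N(v) \ne N(v')$, and dually for $l$-sets).

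The heart of the argument is to show that $J(n,k)$ is an attached graph to $G$ in the sense of Definition 3.5, following the pattern of [15,17,9]. Condition (i) is immediate: every automorphism of $J(n,k)$ is some $f_\theta$, which acts on all of $U \cup W$ as an automorphism of $G$, and uniqueness of the extension is Remark 3.6. For condition (ii), let $f \in Aut(G)$ with $f(U) = U$; then $f$ permutes both $U$ and $W$, so for $v, v' \in U$ it maps the common neighbourhood of $\{v,v'\}$ bijectively onto that of $\{f(v),f(v')\}$ and therefore preserves the number $c_G(v,v')$ of common neighbours in $G$. For distinct $v,v'$ with $m := |v \cup v'| \in \{k+1, \dots, 2k\}$, this number equals $\binom{n-m}{l-m} = \binom{n-m}{n-l}$, which attains its strict maximum $\binom{n-k-1}{n-l} \ge 1$ exactly when $m = k+1$, i.e. when $|v \cap v'| = k-1$. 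Hence $f$ preserves the Johnson adjacency relation on $U$, so $f|_U \in Aut(J(n,k))$. This is the step I expect to be the main obstacle: one must check carefully that the monotonicity of $m \mapsto \binom{n-m}{n-l}$ over the range $m \ge k+1$ (using $k+1 \le l \le n-1$) really singles out Johnson adjacency.

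With $J(n,k)$ attached, I would finish by cases. If $n \ne k+l$ then $|U| = \binom{n}{k} \ne \binom{n}{l} = |W|$ (as $\binom{n}{a} = \binom{n}{b}$ forces $a = b$ or $a + b = n$, and here $k \ne l$), so by Corollary 3.2 no automorphism interchanges $U$ and $W$; thus $Aut(G) = S(U)$ and Theorem 3.9 gives $Aut(G) \cong Aut(J(n,k)) \cong Sym([n])$. If $n = k+l$ then $|U| = |W|$ and complementation $t : v \mapsto [n]\setminus v$ is an order-$2$ automorphism of $G$ with $t(U) = W$ (it reverses inclusions, hence preserves edges), so Theorem 3.10 yields $Aut(G) = H \rtimes \langle t\rangle$. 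To see this semidirect product is direct, note that $f_\theta([n]\setminus v) = [n]\setminus f_\theta(v)$, so $t$ centralises $H$, while $H \cap \langle t\rangle = 1$ because every $f_\theta$ fixes $U$ but $t$ does not; hence $Aut(G) = H \times \langle t\rangle \cong Sym([n]) \times \mathbb{Z}_2$.
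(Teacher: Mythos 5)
Your proposal is correct and takes essentially the same route as the paper: realize $G(n,k,l)$ (after the reduction $k+l\le n$) as a connected bipartite $vd$-graph with the Johnson graph $J(n,k)$ on the $k$-subsets as an attached graph, then apply Theorem 3.9 in the case $n\neq k+l$ (where $\binom{n}{k}\neq\binom{n}{l}$ forces $Aut(G)=S(U)$) and Theorem 3.10 with the complementation automorphism $t$, which commutes with $H$, in the case $n=k+l$. The only difference is that you supply the verification of condition (ii) of Definition 3.5 explicitly, via the common-neighbour count $\binom{n-m}{n-l}$ being strictly maximal at $m=|v\cup v'|=k+1$, a detail the paper delegates to the arguments of [15,17,9]; your counting argument is correct.
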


                                                                                                                                       We now consider a class of graphs which are in some combinatorial  aspects similar to Johnson graphs.
                                                                                                                                       \begin{defn} Let $p$ be a positive prime integer and $q=p^m$ where $m$ is a positive integer. Let $n,k$ be positive integers with  $k <n$.  Let $V(q,n)$ be a vector space of dimension $n$ over the finite field $\mathbb{F}_q.$                                                                                                                                        Let $V_k$ be the family  of all 
subspaces  of  $V(q,n)$ of dimension $k$. 
Every element of $V_k$ is also called a $k$-subspace. 
The Grassmann graph $G(q,n,k)$ is the graph with the  vertex-set $V_{k}$,  in which two vertices $u$ and $w$ are adjacent if and only if $\dim(u\cap w)=k-1$.\
\end{defn}
Note that if $k = 1$, we have a complete graph, so we shall assume that $k >1 $.
 It is clear that  the number of vertices of the Grassmann graph $G(q,n,k)$, that is, 
  $|V_k|$, is
the Gaussian binomial coefficient,
$${n\brack k}_q= \dfrac{(q^{n}-1)(q^n-q)\cdots (q^{n}-q^{k-1})}{(q^{k}-1)(q^k-q)\cdots (q^k-q^{k-1})} =\dfrac{(q^{n}-1)\cdots (q^{n-k+1}-1)}{(q^{k}-1)\cdots (q-1)}.$$

Noting that ${n\brack k}_q={n\brack n-k}_q$,  it follows that $|V_k|=|V_{n-k}|$. It is easy to show that if $1 \leq i < j \leq \frac{n}{2}$, then $|V_i| < |V_j|$. Let $( , )$ be any nondegenerate symmetric bilinear form
   on $V(q,n)$. For each
    $X \subset V(q,n)$ we let $X^{\perp}=\{ w \in V(q,n) | (x,w)=0, $ for  every   $ x \in X  \}.$ 
    It can be shown that if $v$ is a  subspace of  $V(q,n)$,  then  $v^{\perp}$ is also a subspace of $V(q,n)$ and  $dim(v^{\perp})=n-dim(v)$. It can be shown that 
    $G(n,q,k) \cong G(n,q,n-k)$ [2],  and  hence in the sequel we assume that $k \leq \frac{n}{2}$.

It is easy to see that  the distance between two vertices $v$ and $w$ in this graph is $k-dim(v\cap w)$. 
The Grassmann graph is a distance-regular graph of diameter $k$ [2].        
                                                                                                                                       Let $K$ be a field and $V(n)$ be a vector space of dimension $n$ over the field $K$.  Let  $\tau : K\longrightarrow K$  be a 
field automorphism. A semilinear operator  on  $V(n)$ is a mapping  $f : V(n)\longrightarrow V(n)$ such that, 
 \begin{center}
$f(c_{1}v_{1} + c_{2}v_{2}) = \tau (c_{1})f(v_{1}) + \tau (c_{2})f(v_{2})\  (c_{1}, c_{2}\in K, \ and \  v_{1},   v_{2}\in V(n))$.
\end{center}
A semilinear operator $f : V(n)\longrightarrow V(n)$ is a semilinear automorphism if it is a bijection. 
 Let $\Gamma L_n (K)$ be the group of semilinear automorphisms on $V(n)$. Note that this group contains $A(V(n))$, where $A(V(n))$ is the group of 
 non-singular linear mappings on the space $V(n)$. Also,  this group  contains a normal subgroup isomorphic to $K^{*}$,
  namely,  the group $Z= \{ kI_{V(n)} | k \in K \}$, where $I_{V(n)}$ is the identity mapping on $V(n)$. We denote the quotient group  $\frac{\Gamma L_n  (K)}{Z}$  by 
$P \Gamma L_n (K) $.\

Note that if $(a+Z) \in P \Gamma L_n (K)$ and $x$ is an $m$-subspace of $V(n)$, then $ (a+Z)(x)=\{ a(u) | u \in x \}$ is an $m$-subspace of  $V(n)$. In the sequel, we also denote $(a+Z) \in P \Gamma L_n (K)$ by $a$. 
 Now, if $a \in P \Gamma L_n (\mathbb{F}_q)$,  it is easy to see that  the mapping 
  $f_a : V_k \longrightarrow V_k$, defined by   the  rule $f_a(v)=a(v)$,  
  is an automorphism of the Grassmann graph $G=G(q,n,k)$.  Therefore  if we let 
   $$A=\{ f_a | a \in P \Gamma L_n (\mathbb{F}_q)  \},   \ \ \ \ \ \ (1) $$
    then $A$ is a group isomorphic to the group $P \Gamma L_n (\mathbb{F}_q))$ (as abstract groups),  and we have $A \leq Aut(G)$.

  When $n=2k$,  then the  Grassmann graph $G=G(q,n,k)$ has some other automorphisms.  In fact if  $n=2k$,    then the
   mapping $\theta : V_k \longrightarrow V_k$, which is defined by this rule $\theta(v)=v^{\perp}$, for every $k$-subspace of $V(2k)$, is an automorphism of the graph $G=G(q,2k,k)$. 
    Hence $M=\langle A,\theta \rangle   \leq Aut(G)$. 
    It can be shown that $A$ is a normal subgroup of the group $M$. Therefore $M=A\rtimes \langle \theta \rangle  $. 
     Note that the order of $\theta$  is 2 and hence $\langle \theta \rangle   \cong \mathbb{Z}_2$.
Concerning the automorphism groups of Grassmann graphs, from a known fact which  appeared in [3],    we have the following result [2].

\begin{thm}  Let $G$ be the Grassmann graph $G=G(q,n,k)$, where $n >3$ and  $k \leq \frac{n}{2}$. If $n \neq 2k$, then we have 
$Aut(G)=A \cong P \Gamma L_n (\mathbb{F}_q)$,  and if $n=2k$, then we have $Aut(G) =\langle A, \theta \rangle  \cong A\rtimes \langle \theta \rangle  \cong  P \Gamma L_n (\mathbb{F}_q) \rtimes \mathbb{Z}_2$, where $A$ is the group which is defined in $(1)$ and $\theta$ is the mapping which   defined preceding   this theorem.
\end{thm}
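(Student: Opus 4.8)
The plan is to derive this result from Chow's fundamental theorem on the geometry of Grassmann spaces, which is the ``known fact'' referred to in $[3]$. The inclusions $A \leq Aut(G)$, and $M = \langle A, \theta \rangle \leq Aut(G)$ in the case $n = 2k$, have already been established in the paragraphs preceding the statement, so the whole task is to prove the reverse inclusions: every automorphism of $G = G(q,n,k)$ is induced by an element of $P \Gamma L_n (\mathbb{F}_q)$, possibly composed with the annihilator map $\theta$ when $n = 2k$.

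First I would record the structure of the maximal cliques of $G$. Since $k > 1$, and $k < n-1$ because $k \leq \frac{n}{2}$ and $n > 3$, the graph $G$ is neither a complete graph nor the ``dual'' of one, and a routine sunflower-type argument shows that the maximal cliques of $G$ are exactly the \emph{stars} $\mathrm{St}(S) = \{ v \in V_k \mid S \subset v \}$ with $\dim S = k-1$, together with the \emph{tops} $\mathrm{Tp}(T) = \{ v \in V_k \mid v \subset T \}$ with $\dim T = k+1$. A star has $\frac{q^{n-k+1}-1}{q-1}$ vertices and a top has $\frac{q^{k+1}-1}{q-1}$ vertices, and these two numbers coincide precisely when $n = 2k$. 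Every automorphism of $G$ permutes the maximal cliques; hence, when $n \neq 2k$, it must carry stars to stars and tops to tops, so it induces incidence-preserving bijections of the set of $(k-1)$-subspaces and of the set of $(k+1)$-subspaces of $V(q,n)$, that is, a collineation of the projective geometry $PG(n-1,q)$. By the fundamental theorem of projective geometry such a collineation is induced by a semilinear automorphism of $V(q,n)$, i.e. by an element of $P \Gamma L_n (\mathbb{F}_q)$; hence $Aut(G) = A \cong P \Gamma L_n (\mathbb{F}_q)$. All of this is exactly the content of Chow's theorem, so in practice I would invoke it from $[3]$ (or from its exposition in $[2]$) rather than reproduce the argument.

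In the case $n = 2k$, Chow's theorem permits one extra family of maps: an automorphism may interchange the two classes of maximal cliques, and then it is induced by a semilinear isomorphism of $V(q,2k)$ onto its dual followed by passage to annihilators. The map $\theta(v) = v^{\perp}$ is one such map, and if $\phi$ is another then $\phi \theta = \phi \theta^{-1}$ preserves the two clique families, hence is a collineation, so $\phi \theta \in A$ and $\phi \in A\theta$; consequently $Aut(G) = A \cup A\theta = M$. Since $\theta$ sends each star $\mathrm{St}(S)$ to the top $\mathrm{Tp}(S^{\perp})$, whereas every element of $A$ preserves the family of stars, we have $\theta \notin A$, and therefore $\langle \theta \rangle \cap A = 1$. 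Combined with $A \trianglelefteq M$ and $A \langle \theta \rangle = M$, which are already noted, this gives $Aut(G) = A \rtimes \langle \theta \rangle \cong P \Gamma L_n (\mathbb{F}_q) \rtimes \mathbb{Z}_2$.

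The only genuinely hard ingredient is Chow's theorem itself: the classification of the maximal cliques and, above all, the verification that an adjacency-preserving bijection must respect the incidence pattern of stars and tops — any two distinct stars, and any two distinct tops, meeting in at most one vertex, while a star and a top meet in $0$ or $q+1$ vertices — so that the fundamental theorem of projective geometry can be brought to bear. Since the excerpt presents the statement as a consequence of results already available in the literature, I would treat this step as a citation; a self-contained proof would occupy several pages and is unnecessary here.
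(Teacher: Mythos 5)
Your proposal is correct and follows the same route as the paper, which offers no independent argument but simply quotes this result as a known consequence of Chow's theorem in [3] (as presented in [2]); your sketch of the maximal-clique analysis, the star/top size comparison distinguishing $n\neq 2k$ from $n=2k$, and the verification that $\theta\notin A$ giving the semidirect product are accurate fillings-in of that citation. Nothing further is needed.
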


We now proceed to determine the automorphism group of a class of bipartite graphs which are similar in some aspects to the graphs $B(n,k)$

                                                                                                                                  \begin{defn}
                                                                                                                                       
                                                                                                                                        Let $n,k$ be positive integers such that $n\geq 3$, $k \leq n-1 $. Let $q$ be a power of a prime and $\mathbb{F}_q$ be the finite field of order $q$. Let $V(q,n)$ be a vector space of dimension $n$ over $\mathbb{F}_q$. We define the graph $S(q,n,k)$ as  a  graph with the vertex-set $V=V_k \cup V_{k+1}$, in which two vertices $v$ and $w$ are adjacent whenever $v$ is a subspace of $w$  or $w$ is a subspace of $v$, where $V_k$ and $V_{k+1}$ are the sets of  subspaces in $V(q,n)$ of dimensions $k$ and $k+1$,  respectively.
\end{defn}                                                                                                                                         
                                                                                                                                           When $n=2k+1$, then the graph $S(q,n,k)$ is known as a doubled Grassmann graph [2]. 
                                                                                                                                           Noting that ${n\brack k}_q={n\brack n-k}_q$, it is easy to show that $S(n,q,k) \cong S(n,q,n-k-1)$. Hence in the sequel we assume $k <  \frac{n}{2}$.
                                                                                                                                         It can be shown that the graph $S(q,n,k)$ is   a connected bipartite   irreducible graph.
                                                                                                                                       We formally state and prove these facts.
                                                                                                                                       
                                                                                                                                       \begin{prop}The graph $G=S(q,n,k)$ which is defined in Definition $3.13, $  is a  connected bipartite   irreducible graph.
                                                                                                                                       \end {prop}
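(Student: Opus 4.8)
The plan is to verify the three assertions---bipartite, connected, and $vd$---separately, using only the subspace-lattice structure of $V(q,n)$ and the already-recalled fact that the Grassmann graph $G(q,n,k)$ is connected. For bipartiteness I would note that $\{V_k,V_{k+1}\}$ is a partition of $V(G)$, and that no edge can join two vertices of the same class: if $v\subseteq w$ or $w\subseteq v$ with $\dim v=\dim w$ then $v=w$, so two distinct $k$-spaces (or two distinct $(k+1)$-spaces) are non-adjacent. Hence every edge has one end in $V_k$ and the other in $V_{k+1}$, so $G$ is bipartite with parts $V_k$ and $V_{k+1}$, both non-empty since $1\le k$ and $k+1\le n$.

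For connectivity, the key observation is that an edge $\{u_1,u_2\}$ of the Grassmann graph $G(q,n,k)$---that is, $\dim(u_1\cap u_2)=k-1$, equivalently $\dim(u_1+u_2)=k+1$---lifts to the path $u_1\sim(u_1+u_2)\sim u_2$ of length $2$ in $S(q,n,k)$, since $u_1+u_2\in V_{k+1}$ contains both $u_1$ and $u_2$. Because $G(q,n,k)$ is connected (it is distance-regular of diameter $k$), it follows that all vertices of $V_k$ lie in a single connected component of $S(q,n,k)$. Finally, every $W\in V_{k+1}$ contains some $k$-subspace $u$, and $\{W,u\}$ is an edge of $S(q,n,k)$, so every vertex of $V_{k+1}$ is joined to that same component; hence $S(q,n,k)$ is connected.

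For the $vd$-property I would first record the numerical reduction: the standing hypothesis $k\le\frac{n}{2}$ together with $n\ge 3$ forces $n-k\ge 2$, so for every $x\in V_k$ the $(k+1)$-spaces containing $x$ correspond bijectively to the $1$-dimensional subspaces of the $(n-k)$-dimensional quotient $V(q,n)/x$, whence $|N(x)|=\frac{q^{n-k}-1}{q-1}\ge q+1\ge 3$. Then, given distinct vertices $x,y$, I would split into three cases. If $x$ and $y$ lie in different parts, $N(x)$ and $N(y)$ are non-empty subsets of the disjoint sets $V_{k+1}$ and $V_k$, hence unequal. If $x,y\in V_{k+1}$, choose $v\in x\setminus y$ (possible since $x\ne y$ and $\dim x=\dim y$) and extend $\langle v\rangle$ to a $k$-subspace $u$ of $x$; then $u\in N(x)\setminus N(y)$. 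If $x,y\in V_k$, then $\dim(x+y)\ge k+1$: when $\dim(x+y)\ge k+2$ no $(k+1)$-space contains both, so $N(x)\cap N(y)=\emptyset\ne N(x)$; when $\dim(x+y)=k+1$ the only common neighbour is $x+y$ itself, and $|N(x)|\ge 3$ again forces $N(x)\ne N(y)$. In every case $N(x)\ne N(y)$, so $G$ is a $vd$-graph.

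The routine parts (bipartiteness, the count of $(k+1)$-overspaces) are immediate; the one place that needs a little care is the case $x,y\in V_k$ of the $vd$-property, where two $k$-spaces may share a common $(k+1)$-overspace and equality of neighbourhoods must be ruled out by the counting bound $|N(x)|\ge 3$---which is exactly where the hypothesis $k\le n/2$ (rather than merely $k\le n-1$) is used. Accordingly I would state the reduction $n-k\ge 2$ explicitly at the very start of that step so the role of the hypothesis is visible.
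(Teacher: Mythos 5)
Your proposal is correct, and it differs from the paper's proof in two worthwhile ways. For connectivity, the paper does not invoke the connectivity of the Grassmann graph: it argues directly by induction on $j$ where $\dim(v_1\cap v_2)=k-j$, constructing from bases of $v_1$ and $v_2$ an explicit intermediate $k$-space $s$ with $\dim(s\cap v_2)=k-1$ and $\dim(s\cap v_1)=k-i$; your route instead black-boxes the already-recalled fact that $G(q,n,k)$ is connected and lifts each Grassmann edge $\{u_1,u_2\}$ to the path $u_1\sim u_1+u_2\sim u_2$, then attaches each $(k+1)$-space to the component via any $k$-subspace it contains. Both hinge on the same observation that $u_1+u_2$ is a common neighbour of adjacent $k$-spaces, but yours is shorter at the cost of citing an external fact, while the paper's induction is self-contained (in effect re-proving Grassmann connectivity inside $S(q,n,k)$). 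Second, the paper dismisses the $vd$-property with ``it is easy to show,'' whereas you give the full case analysis, and in doing so you isolate a genuinely relevant point the paper glosses over: the property uses the standing reduction $k\le \frac{n}{2}$ (equivalently $n-k\ge 2$), since for $k=n-1$ all $k$-spaces would have the single neighbour $V(q,n)$ and the $vd$-property would fail; your counting bound $|N(x)|\ge q+1\ge 2$ in the case $\dim(x+y)=k+1$ is exactly what rules out equal neighbourhoods there. Your bipartiteness argument coincides with the paper's (which treats it as clear).
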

                                                                                                                                     
                                                                                                                                      \begin{proof} It is clear that the graph $G=S(q,n,k)$ is a bipartite graph with partition $V_k \cup V_{k+1}$. It is easy to show that $G$ is an irreducible graph.  We now show that $G$ is a connected graph. It is sufficient to show that if $v_1,v_2$ are two vertices in $V_{k}$, then there is a path in $G$ between $v_1$ and $v_2$. Let $dim(v_1 \cap v_2)=k-j$, $1 \leq j \leq k$. 
                                                                                                                                      We prove our assertion by induction on $j$. If $j=1$, then $u=v_1+v_2$ is a subspace of $V(n,q)$ of dimension $k+k-(k-1)=k+1$, which contains both of $v_1$ and $v_2$.   Hence,  $u \in V_{k+1}$ is adjacent to both of the vertices $v_1$ and $v_2$.
                                                                                                                                       Thus, if $j=1$, then there is a path between $v_1$ and $v_2$ in the graph $G$. Assume when $j=i$,  $0 < i <k$, then there is a path in $G$ between $v_1$ and $v_2$. We now assume $j=i+1$. Let $v_1 \cap v_2=w$, and let $B=\{ b_1,...,b_{k-i-1}  \}$ be a basis for the subspace $w$ in the space $V(q,n)$. We can extend $B$ to   bases $B_1$ and $B_2$ for the subspaces
                                                                                                                                        $v_1$ and $v_2$, respectively. Let $B_1= \{ b_1,...,b_{k-i-1}, c_1,...,c_{i+1}  \}$ be a basis for $v_1$ and $B_2= \{ b_1,...,b_{k-i-1}, d_1,...,d_{i+1}  \}$ be a basis for $v_2$. Consider the subspace $s=<b_1,...,b_{k-i-1}, c_1,d_2,...,d_{i+1}>$.
                                                                                                                                         Then $s$ is a $k$-subspace of the space $V(q,n)$ such that $dim(s \cap v_2)=k-1$ and $dim(s \cap v_1)=k-i$. Hence by the induction assumption, there is a path $P_1$ between vertices $v_2$ and $s$, and a path $P_2$ between vertices $s$ and $v_1$. We now conclude that there is a path in the graph $G$ between vertices  $v_1$ and $v_2$. 
                                                                                                                                         \end{proof}

\begin{thm}                                                                                                                                          Let $G=S(q,n,k)$ be the graph which is defined in Definition $3.13. $ If $n\neq 2k+1$, then we have $Aut(G) \cong P \Gamma L_n (\mathbb{F}_q)$. If $n=2k+1$, then $Aut(G) \cong P \Gamma L_n (\mathbb{F}_q) \rtimes \mathbb{Z}_2 $. 
                                                                                                                                            \end{thm}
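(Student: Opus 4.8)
First I would realise the Grassmann graph on the vertex class $V_k$ as an attached graph to $G=S(q,n,k)$ in the sense of Definition 3.5, and then combine this with Theorems 3.9 and 3.10 and the known automorphism group of the Grassmann graph (Theorem 3.13). By Proposition 3.15, $G$ is a connected bipartite $vd$-graph with bipartition $U=V_k$, $W=V_{k+1}$, so the machinery developed above applies. Since $S(q,n,k)\cong S(q,n,n-k-1)$, I may assume $n\ge 2k+1$, which keeps us out of the exceptional case $n=2k$ of Theorem 3.13; and I would treat $k\ge 2$ as the main case, the case $k=1$ --- where $G(q,n,1)$ is a complete graph and the argument below breaks down --- being the incidence graph of the point-line geometry of $PG(n-1,q)$ and so handled classically, with automorphism group $P\Gamma L_n(\mathbb{F}_q)$ for $n\ge 4$ and $P\Gamma L_3(\mathbb{F}_q)\rtimes\mathbb{Z}_2$ for $n=3=2k+1$.

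Next I would record the combinatorial bridge: for two distinct $k$-subspaces $v_1,v_2$, there is a common neighbour of $v_1$ and $v_2$ in $G$ if and only if $\dim(v_1\cap v_2)=k-1$, because every neighbour of a $k$-subspace in $G$ is a $(k+1)$-subspace and a $(k+1)$-subspace contains both $v_1$ and $v_2$ exactly when $\dim(v_1+v_2)\le k+1$. Hence the Grassmann graph $G_1=G(q,n,k)$ is recovered from $G$ as the ``distance-two inside $U$'' graph, and condition (ii) of Definition 3.5 is immediate: if $f\in Aut(G)$ fixes $U$ setwise then, by Lemma 3.1, it fixes $W$ setwise, so it carries common-neighbour pairs to common-neighbour pairs, whence $f|_U\in Aut(G_1)$. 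For condition (i), let $\varphi\in Aut(G_1)$; by Theorem 3.13 (we use $n>3$ and $n\neq 2k$) we have $\varphi=f_a$ for some $a\in P\Gamma L_n(\mathbb{F}_q)$, and the map $x\mapsto a(x)$ --- which makes sense on subspaces of every dimension and preserves both dimension and inclusion --- is an automorphism of $G$ whose restriction to $U$ is $f_a$; uniqueness of this extension is Remark 3.6. Thus $G_1$ is an attached graph to $G$, and Proposition 3.8 gives $S(U)\cong Aut(G_1)\cong P\Gamma L_n(\mathbb{F}_q)$.

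Then I would place $Aut(G)$ relative to $S(U)$. If $n\neq 2k+1$, then $|U|=|V_k|\neq|V_{k+1}|=|W|$ --- the Gaussian binomial coefficients ${n\brack k}_q$ and ${n\brack k+1}_q$ coincide only when $k+1=n-k$ --- so by Corollary 3.2 every automorphism of $G$ fixes $U$ setwise; hence $Aut(G)=S(U)$, and Theorem 3.9 gives $Aut(G)\cong Aut(G_1)\cong P\Gamma L_n(\mathbb{F}_q)$. If $n=2k+1$, then $n-k=k+1$, so $|U|=|W|$; fixing a nondegenerate symmetric bilinear form on $V(q,n)$, the polarity $t\colon x\mapsto x^{\perp}$ sends $V_k$ onto $V_{n-k}=V_{k+1}$ and $V_{k+1}$ onto $V_{n-k-1}=V_k$, reverses inclusion, and has order $2$; hence $t\in Aut(G)$ with $t(U)=W$, and Theorem 3.10 gives $Aut(G)=Aut(G_1)\rtimes\langle t\rangle\cong P\Gamma L_n(\mathbb{F}_q)\rtimes\mathbb{Z}_2$.

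The main obstacle I anticipate is condition (i) of the attached-graph definition, since that is exactly where one must invoke the precise description of $Aut(G(q,n,k))$ --- namely, that every automorphism of the Grassmann graph is induced by a semilinear bijection, which then visibly extends to subspaces of all dimensions. It is also where the degenerate cases intrude: the reduction to $n\ge 2k+1$ can produce $k=1$ (for instance $S(q,4,2)\cong S(q,4,1)$), where $G(q,n,1)$ is complete and far too symmetric to be an attached graph, so that case must be treated separately as indicated above; and if one keeps $n=2k$ instead of reducing, one must additionally check that the Grassmann polarity $v\mapsto v^{\perp}$ on $V_k$ does not extend to $S(q,2k,k)$ --- which is true because the union of the $k$-subspaces inside a fixed $(k+1)$-subspace equals that $(k+1)$-subspace, whereas the union of the $k$-subspaces containing a fixed $(k-1)$-subspace is all of $V(q,n)$.
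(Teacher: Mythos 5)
Your proof follows essentially the same route as the paper: you realise the Grassmann graph $G(q,n,k)$ on $V_k$ as an attached graph via the same key observation (two $k$-spaces have a, then unique, common neighbour in $S(q,n,k)$ exactly when their intersection has dimension $k-1$, the neighbour being $v+w$), and you conclude exactly as the paper does, from Theorem 3.13 together with Theorem 3.9 when $n\neq 2k+1$ (since then $|V_k|\neq |V_{k+1}|$) and Theorem 3.10 with the polarity $v\mapsto v^{\perp}$ when $n=2k+1$. Your explicit handling of the degenerate cases $k=1$ (where $G(q,n,1)$ is complete and condition (i) of Definition 3.5 fails, so a classical argument is needed) and $n=2k$ (where the Grassmann polarity does not extend) is in fact more careful than the paper, which disposes of these only implicitly through the normalisations $k>1$ and $k<\frac{n}{2}$.
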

                                                                                                                                         \begin{proof}                                                                                                                                         From Proposition 3.14,  it follows that the graph $G=S(q,n,k)$ is    connected,  bipartite and irreducible  with the vertex-set $V_k \cup V_{k+1}$, $V_k \cap V_{k+1}= \emptyset$. Let $G_1=G(q,n,k)=(V_k,E)$ be the Grassmann graph with the vertex-set $V_k$,  when $k > 1$ and $V_2$,  when $k=1$. We show that $G_1$ is  a  faithful representation of the    graph $G$.\\
                                                                                                                                           Firstly, the condition (i) of Definition 3.4,  holds  because $k < \frac{n}{2}$ and every automorphism 
of the Grassmann graph                                                                                                                                            $G(q,n,r)$  is of the form $f_a$, $a \in P \Gamma L_n (\mathbb{F}_q)$,  and  is  an automorphism of the graph $G(q,n,s)$ when $r,s < \frac{n}{2}$. Also, note that if $X,Y$ are subspaces of $V(q,n)$ such that $X \leq Y$, then $f_a(X) \leq f_a(Y)$. \\
Now, suppose that $f$ is an automorphism of the graph $G$ such that $f(V_k)=V_k$. We show that the restriction of $f$ to the set $V_k$, namely $g=f|_{V_k}$,  is an automorphism of the graph $G_1$. It is trivial that $g$ is a permutation of the vertex-set $V_k$. Let $v$ and $w$ be adjacent vertices in the graph $G_1$. We show that $g(v)$ and $ g(w)$ are adjacent in the graph $G_1$. We assert that there is exactly one vertex $u$ in the graph $G$ such that $u$ is adjacent to   both of the vertices $v$ and $w$.  If the vertex $u$ is adjacent to  both of the vertices $v$ and $w$, then $v$ and $w$ are $k$-subspaces of the $(k+1)$-space $u$. Hence 
$u$ contains the space $v+w$. Since $dim(v+w)$=$dim(v)+dim(w)-dim(v\cap w)=k+k-(k-1)=k+1$, we have  $u=v+w$. In other words, the vertex $u=v+w$ is the unique vertex in the graph $G$ such that  $u$ is adjacent to  both of the vertices $v$ and $w$. Also, note that our discussion shows  that if $x,y \in V_k$ are such that $dim(x \cap y)\neq (k-1)$, then $x$ and $y$ have no   common neighbor in the graph $G$. 
 
      Now since the vertices $v$ and $w$ have exactly 1 common neighbor in the graph $G$,  therefore $f(v)=g(v)$ and $f(w)=g(w)$ have  exactly 1 common neighbor in the graph $G$. It follows that $dim(g(v) \cap g(w))=k-1$, and hence $g(v)$ and $g(w)$ are adjacent vertices in the Grassmann graph $G_1$. \\
 We now conclude that the graph $G_1$ is  a  faithful representation  of the   graph $G$.\
 
  There are two possible cases, namely,  (1) $2k+1 \neq n$, or (2)  $2k+1 = n$.\\
  (1) Let $2k+1 \neq n$. Noting that ${n\brack k}_q < {n\brack k+1}_q$,   it follows that $|V_k| \neq |V_{k+1}|$. Therefore by Theorem 3.8,  and Theorem 3.12, we have $Aut(G) \cong Aut(G_1) \cong  P \Gamma L_n (\mathbb{F}_q)$.\\
  (2) If $2k+1=n$, since ${n\brack k}_q={n\brack k+1}_q$, 
   then $|V_k| = |V_{k+1}|$. Hence, the mapping $\theta : V(G) \longrightarrow V(G)$ defined by the rule $\theta(v) =v^{\perp}$ is an automorphism of the graph $G$ of order 2 such that
   $\theta(V_k)=V_{k+1}$. Hence, by Theorem $3.9,$ and Theorem $3.12, $  we have $Aut(G) \cong Aut(G_1) \rtimes \langle \theta  \rangle  \cong P\Gamma L_n (\mathbb{F}_q) \rtimes \mathbb{Z}_2$.

                                                                                                                                                  \end{proof}                                                                                                                                         
                                                                                                                                                We now show another application of Theorem 3.9,  in determining the automorphism groups of some classes of graphs which are important in algebraic graph theory. \

                                                                                                                                                 If $ G_1, G_2 $ are graphs, then their direct product  (or tensor product) is the graph  $   G_1 \times G_2 $
 with vertex set $ \{( v_1,v_2) \  |  \  v_1 \in G_1,   v_2 \in G_2\} $, and for which vertices $( v_1,v_2)$ and $ ( w_1,w_2)  $ are adjacent precisely if $ v_1$ is adjacent to $w_1$ in $G_1$ and $ v_2$ is adjacent to $w_2$ in $G_2$. It can be shown  that the direct product is commutative and associative [8].  The following theorem, first was proved by Weichsel
(1962),  characterizes connectedness in direct products of two factors. \

\begin{thm} $[8]$ Suppose $G_1$ and $G_2$ are connected nontrivial graphs. If at least one of $G_1$ or $G_2$ has an odd cycle, then $   G_1 \times G_2 $ is connected. If both $G_1$ and
$G_2 $ are bipartite, then $   G_1 \times G_2 $ has exactly two components.
\end{thm}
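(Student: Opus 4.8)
The plan is to reduce the whole statement to one elementary observation about walks. By the definition of the edge set of $G_1 \times G_2$, a sequence $((a_0,b_0),(a_1,b_1),\dots,(a_\ell,b_\ell))$ is a walk of length $\ell$ in $G_1 \times G_2$ if and only if $(a_0,a_1,\dots,a_\ell)$ is a walk of length $\ell$ in $G_1$ and $(b_0,b_1,\dots,b_\ell)$ is a walk of length $\ell$ in $G_2$. Hence two vertices $(u_1,u_2)$ and $(v_1,v_2)$ lie in the same connected component of $G_1 \times G_2$ if and only if there is a single integer $\ell \ge 0$ that lies in both $L_{G_1}(u_1,v_1)$ and $L_{G_2}(u_2,v_2)$, where $L_H(u,v) = \{\ell : H \text{ has a } u\text{--}v \text{ walk of length } \ell\}$. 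So everything comes down to understanding these walk-length sets.

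The one lemma that needs genuine (if standard) argument is the description of $L_H(u,v)$ for a connected nontrivial graph $H$. Since $H$ is connected it has an edge, so any $u$--$v$ walk may be padded by traversing an incident edge forth and back; thus $L_H(u,v)$ is closed under adding $2$ and contains $d_H(u,v)$. If moreover $H$ is non-bipartite, then an odd closed walk based at some vertex reachable from $u$ can be spliced in to produce a walk of the opposite parity; combining this with the padding gives $L_H(u,v) \supseteq \{\ell : \ell \ge N\}$ for some $N = N(u,v)$. If instead $H$ is bipartite with parts $X,Y$, then $L_H(u,v)$ is exactly the set of integers $\ge d_H(u,v)$ of the same parity as $d_H(u,v)$, and that parity is even or odd according as $u,v$ lie in the same part or in different parts. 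Establishing this cleanly is the main obstacle; the rest is bookkeeping.

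For the first assertion, assume without loss of generality that $G_1$ is non-bipartite. Given $(u_1,u_2)$ and $(v_1,v_2)$, pick any $\ell_0 \in L_{G_2}(u_2,v_2)$ (possible since $G_2$ is connected), then choose $\ell \ge \max(\ell_0, N_1(u_1,v_1))$ with $\ell \equiv \ell_0 \pmod 2$. By the lemma $\ell \in L_{G_1}(u_1,v_1) \cap L_{G_2}(u_2,v_2)$, so the two vertices are joined in $G_1 \times G_2$; hence $G_1 \times G_2$ is connected.

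For the second assertion, let $G_1 = X_1 \cup Y_1$ and $G_2 = X_2 \cup Y_2$ be the (essentially unique) bipartitions, and define $\pi(v_1,v_2) \in \mathbb{Z}_2$ to be $0$ if $v_1,v_2$ lie in matching parts (both first or both second) and $1$ otherwise. Every edge of $G_1 \times G_2$ switches the part of each coordinate, hence leaves $\pi$ unchanged, so $\pi$ is constant on each component; since connectedness of $G_i$ forces $X_i, Y_i \neq \emptyset$, both values of $\pi$ are attained and $G_1 \times G_2$ has at least two components. Conversely, $\pi(u_1,u_2) = \pi(v_1,v_2)$ forces $u_1,v_1$ to be in the same part of $G_1$ exactly when $u_2,v_2$ are in the same part of $G_2$, i.e.\ $d_{G_1}(u_1,v_1) \equiv d_{G_2}(u_2,v_2) \pmod 2$; the lemma then supplies a sufficiently large $\ell$ of that common parity in $L_{G_1}(u_1,v_1) \cap L_{G_2}(u_2,v_2)$, joining the two vertices. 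Thus each of the two fibres of $\pi$ is connected, so $G_1 \times G_2$ has exactly two components.
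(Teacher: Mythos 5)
Your proposal is correct. Note, however, that the paper does not prove this statement at all: it is Weichsel's theorem, quoted verbatim with the citation [8] (Hammack--Imrich--Klavz\v{a}r) and used as a black box, so there is no in-paper argument to compare against. What you have written is a complete and correct self-contained proof along the classical lines: the observation that a walk in $G_1\times G_2$ is exactly a pair of equal-length walks in the factors reduces everything to the walk-length sets $L_H(u,v)$, and your lemma describing these sets (closure under adding $2$ via edge back-tracking, parity flip via splicing in an odd closed walk in the non-bipartite case, and the exact parity constraint $\ell\equiv d_H(u,v)\pmod 2$ in the bipartite case) is the standard and correct way to make the reduction work. The deduction of both assertions from the lemma is sound, including the count of exactly two components via the invariant $\pi$ (constant on components, both values attained, and each fibre connected because equal $\pi$ forces $d_{G_1}(u_1,v_1)\equiv d_{G_2}(u_2,v_2)\pmod 2$). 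The only caveats worth recording are small ones you implicitly use: ``nontrivial'' must guarantee every vertex has an incident edge (so the padding step applies), and the bipartition of a connected bipartite graph is unique up to swapping the parts, which is what makes $\pi$ well defined.
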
   

Thus, if one of the graphs $G_1$ or $G_2$ is a connected non-bipartite graph, then the graph $   G_1 \times G_2 $ is a connected graph. 
If $K_2$ is the complete graph on the set $\{ 0,1 \}$, then the direct product $B(G)=G \times K_2$ is a bipartite graph, and is  called the bipartite double  cover of $G$ (or the $bipartite\  double$  of $G$).                                                                        
Then, 
 $$V(B(G))=\{(v,i)| v\in V(G), i \in \{ 0,1 \}  \}, $$                                                          
                                                                             and two vertices $(x,a)$ and $(y,b)$ are adjacent in the graph  $B(G)$ if and only if $a \neq b$ and $x$ is adjacent to $y$ in the graph $G$. The notion of the bipartite double cover  of $G$ has many applications in algebraic  graph theory [2]. \

                                                                             Consider the bipartite double cover of $G$, namely,  the graph $B(G)= G \times K_2 .$ It is easy to see that  the group $Aut(B(G))$ contains the group $Aut(G) \times \mathbb{Z}_2$ as a subgroup. In fact,  if for $g \in Aut(G)$,    we define the mapping $e_g$ by the rule $e_g(v,i)=(g(v),i)$, $i\in \{0,1\}, v\in V(G)$,  then $e_g \in Aut(B(G))$.   It is easy to see that $H=\{e_g | g \in Aut(G)  \} \cong Aut(G)$ is a subgroup of $Aut(B(G))$. Let $t$ be the mapping defined on $V(B(G))$ by the rule $t(v,i)=(v,i^c), $ where $i^c=1$ if $i=0$ and $i^c=0$ if $i=1$. It is clear that $t$ is an  automorphism of the graph $B(G).$ Hence, $\langle H,t \rangle  \leq Aut(B(G)).$ Noting that for every $e_g \in H$ we have $e_gt=te_g, $ we deduce that $\langle H,t \rangle  \cong H \times \langle t \rangle  $. We now conclude that $Aut(G) \times \mathbb{Z}_2 \cong H  \times \mathbb{Z}_2 \leq Aut(B(G)). $                                                                             
                                                                           \
                                                                             
                                                                             Let $G$ be a graph. $G$ is called a  $stable$ graph when we have $Aut(B(G)) \cong Aut(G) \times \mathbb{Z}_2$. Concerning the notion and some properties of stable graphs,  see [12,13,21,23].\

Let  $n,k \in \mathbb{ N}$ with $ k < \frac{n}{2}  $ and let $[n]=\{1,...,n\}$.   The $Kneser$  $graph$ $K(n,k)$ is defined as the graph whose vertex set is $V=\{v\mid v\subseteq [n], |v|=k\}$ and two vertices $v$,$w$ are adjacent if and only if $|v\cap w|$=0.                                                                                                                                                   It is easy to see that  if $H(n,k)$ is a bipartite Kneser graph, then $H(n,k) \cong K(n,k) \times K_2$. Now, it follows from Theorem 3.9, (or [18]) that   Kneser graphs are stable graphs.

  The next theorem  provides a sufficient condition such that   when a connected non-bipartite irreducible   graph $G$  satisfies this condition, then $G$ is a stable graph. 
  
\begin{thm}Let $G=(V,E)$ be a connected non-bipartite irreducible  graph. Let $v,w \in V$ be arbitrary. Let $c(v,w)$ be the number of common neighbors of $v$ and $w$ in the graph $G$. Let $a_0 >0$ be a fixed   integer. If $c(v,w)=a_0$, when $v$ and $w$ are adjacent and $c(v,w) \neq a_0$ when $v$ and $w$ are non-adjacent, then we have, 
                                                                                                 $$Aut(G \times K_2) = Aut(B(G))  \cong Aut(G) \times \mathbb{Z}_2,  $$ 
                                                                                                                  in other words, $G$ is a stable graph.
                                                                                                                                      
                                                                                                                                                                            \end{thm}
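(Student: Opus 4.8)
The plan is to realize the bipartite double $B(G)=G\times K_2$ as a connected bipartite $vd$-graph whose attached graph (in the sense of Definition 3.5) is a copy of $G$ itself, and then to apply Theorem 3.10 while checking that the resulting semidirect product is in fact direct. Write $V(B(G))=U\cup W$ with $U=V\times\{0\}$ and $W=V\times\{1\}$. Since $G$ is connected and non-bipartite it contains an odd cycle, so $B(G)$ is connected by Theorem 3.17, and it is visibly bipartite with parts $U,W$. To see that $B(G)$ is a $vd$-graph, suppose $(v,i)\neq(w,j)$ had the same neighbourhood in $B(G)$. A connected non-bipartite graph has no isolated vertex, so these neighbourhoods are non-empty; but $N((v,i))\subseteq W$ when $i=0$ and $N((v,i))\subseteq U$ when $i=1$, forcing $i=j$ and hence $v\neq w$. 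Then $N((v,i))=\{(u,i^{c}):u\in N_G(v)\}$, so equality of neighbourhoods gives $N_G(v)=N_G(w)$, contradicting that $G$ is a $vd$-graph. Thus the results of this section apply to $B(G)$, and the maps $e_g$ and $t(v,i)=(v,i^{c})$ described just before the theorem are genuine automorphisms of $B(G)$.

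Identify $v\in V$ with $(v,0)\in U$, so that $G$ becomes a graph $G_1=(U,E_1)$ with $Aut(G_1)\cong Aut(G)$. I would then verify the two clauses of Definition 3.5 for $G_1$. Clause (i) holds because $e_g$ restricts to $g$ on $U$, and the extension is unique by Remark 3.6 (as $B(G)$ is a $vd$-graph). For clause (ii), the key point is that $G$-adjacency between vertices of $U$ can be recognised purely from the structure of $B(G)$ via common-neighbour counts. Indeed, for distinct $v,w\in V$, every common neighbour of $(v,0)$ and $(w,0)$ in $B(G)$ has the form $(u,1)$ with $u$ adjacent in $G$ to both $v$ and $w$; hence the number of common neighbours of $(v,0)$ and $(w,0)$ in $B(G)$ equals $c(v,w)$. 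By the hypothesis on $c$, two distinct vertices of $U$ are adjacent in $G_1$ if and only if they have exactly $a_0$ common neighbours in $B(G)$.

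Now if $f\in Aut(B(G))$ with $f(U)=U$, then $f|_U$ is a bijection of $U$ and, being the restriction of a graph automorphism, preserves the property ``having exactly $a_0$ common neighbours''; applying this to both $f$ and $f^{-1}$ shows $f|_U$ maps edges of $G_1$ to edges and non-edges to non-edges, i.e.\ $f|_U\in Aut(G_1)$. Therefore $G_1$ is an attached graph to $B(G)$, and Proposition 3.8 gives $S(U)\cong Aut(G_1)\cong Aut(G)$. Since $|U|=|W|=|V|$ and $t(U)=W$, Theorem 3.10 yields $Aut(B(G))=S(U)\rtimes\langle t\rangle$ with $\langle t\rangle\cong\mathbb{Z}_2$. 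Finally, $te_g=e_gt$ for all $g\in Aut(G)$ (both send $(v,i)$ to $(g(v),i^{c})$), so $\langle t\rangle$ commutes with $S(U)=E_{G_1}$ and is therefore central in $Aut(B(G))=\langle t,S(U)\rangle$; as $t\notin S(U)$ we conclude $Aut(B(G))=S(U)\times\langle t\rangle\cong Aut(G)\times\mathbb{Z}_2$, so $G$ is stable. (Equivalently, one may just compare $|Aut(B(G))|=2|Aut(G)|$ coming from Theorem 3.10 with the embedding $Aut(G)\times\mathbb{Z}_2\hookrightarrow Aut(B(G))$ recorded before the theorem.)

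The main obstacle is clause (ii) of the attached-graph definition — the one place where the combinatorial hypothesis on $c(v,w)$ does real work, letting us recover the edge set of $G$ from the intrinsic structure of $B(G)$; once that is in hand, the rest is an assembly of Proposition 3.8, Theorem 3.10, and the commuting relation $te_g=e_gt$.
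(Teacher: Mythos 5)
Your proposal is correct and follows essentially the same route as the paper: you realize $B(G)=G\times K_2$ as a connected bipartite $vd$-graph, show the copy $G_1$ of $G$ on $U$ is an attached graph (clause (ii) being exactly the common-neighbour-count argument the hypothesis on $c(v,w)$ is designed for), and then combine Proposition 3.8, Theorem 3.10 and the relation $te_g=e_gt$ to turn the semidirect product into $Aut(G)\times\mathbb{Z}_2$. If anything, your write-up is slightly more careful than the paper's on two minor points: the explicit connectivity of $B(G)$ via Theorem 3.17 and the justification that the semidirect product is direct because $t$ is central and lies outside $S(U)$.
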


 \begin{proof} Note that the graph  $G \times K_2$ is a bipartite graph with the vertex set  $V=U \cup W$, where $U= \{ (v,0) | v\in V(G) \}$ and  $W= \{ (v,1) | v\in V(G) \}.$ Since $G$ is an irreducible  graph, then   the graph  
$G \times K_2$ is an   irreducible  graph. In fact if the vertices $x,y \in V$ are such that $N(x)=N(y)$, then $x,y \in U$ or $x,y \in W. $ Without loss of generality, we can assume that $x,y \in U.$ Let $x=(u_1,0)$ and $y=(u_2,0)$. Let $N(x)=\{(v_1,1), (v_2,1), ..., (v_m,1)  \}$ and $N(y)=\{(t_1,1), (t_2,1), ..., (t_p,1)  \}$, where $v_is$  and $t_js$ are in $V(G). $ Thus $m=p$ and $N(u_1)=\{u_1,...,u_m  \}$=$\{t_1,...,t_m  \}=N(u_2).$ Now since $G$ is an irreducible graph, it follows that $u_1=u_2$ and therefore $x=y$. \\
Let $G_1=(U,E_1)$ be the graph with vertex-set  $U$ in which two vertices $(v,0)$ and $(w,0)$ are adjacent if and only if $v_1$ and $v_2$ are adjacent in  the graph $G$.   It is clear that $G_1 \cong G$.  Therefore we have   $Aut(G_1) \cong Aut(G).$ For every   $f \in Aut(G)$,  we let

$$d_f : U \rightarrow U, \ d_f(v,0)=(f(v),0), \  for \  every \  (v,0) \in U.$$  
Then $d_f$ is an automorphism of the graph $G_1.$                  
 If we let $A=\{d_f | f \in Aut(G)  \}$,   then $A$ with the  operation of composition  is a group,
   and it is easy to see that  $A \cong Aut(G_1)$ (as abstract groups). We now assert that the graph $G_1$ is  a  faithful representation  of     the bipartite graph $B=G \times K_2$. Let $g \in Aut(B)$ be such that $g(U)=U.$
We assert that $h=g|_U$, the restriction of $g$ to $U$,  is an  automorphism of the graph $G_1$. It is clear that $h$ is a permutation of $U$. Let $(v,0)$ and $(w,0)$ be adjacent vertices in $G_1$. Then $v,w$ are adjacent in the graph $G$. Hence there are vertices $u_1,...,u_{a_0}$ in the graph $G$ such that 
the set of common neighbor(s) of $v$ and $w$ in $G$ is $\{u_1,...,u_{a_0} \}$. Noting that $(x,1)$ is a common neighbor of $(v,0)$ and $(w,0)$ in the graph $B$   if  and only if  $x$ is a common neighbor of $v,w$ in the graph $G$, we deduce that the set  $\{ (u_1,1),...,(u_{a_0},1) \}$ is the set of common neighbor(s) of $(v,0)$ and $(w,0)$ in the graph $B$. 
Since $g$ is an automorphism of the graph $B$,  $g(v,0)$ and $g(w,0)$  have $a_0$ common neighbor(s) in the graph $B$. Note that if $d_{G_1}(g(v,0),g(w,0)) >2 $, then these vertices have no common neighbor in the graph $B$. Also, if  
  $d_{G_1}(g(v,0),g(w,0)) =2,  $ then $d_G(v,w)=2$, and hence $v,w$ have $  c(v,w)\neq a_0$ common neighbor(s)  in the graph $G$. Hence $(v,0)$ and $(w,0)$ have $c(v,w)$ common neighbor(s) in the graph $B$, and therefore  $g(v,0),g(w,0)$ have $  c(v,w) \neq a_0$ common neighbor(s) in the graph $B$. We now deduce that $d_{G_1}(g(v,0),g(w,0)) =1$. It follows that $h=g|_U$ is an automorphism of the graph $G_1$. 
  Thus, the condition (ii) of Definition 3.4, 
     holds for the graph $G_1.$\

Now, suppose that $\phi$ is an automorphism of the graph $G_1.$ Then there is an automorphism $f$ of the graph $G$ such that $\phi = d_f.$ We now define   the mapping $e_{\phi}$ on the set $V(B)$ by the following rule:
 
 $$ (*)\ \ \ \  e_{\phi}(v,i) =  \begin{cases}
(f(v),0), \   $ if$ \    i=0  \\   (f(v),1),       \ \       $if$ \  i=1.  \\
 \end{cases} $$ \\
It is easy to see that $e_{\phi}$ is an extension  of the automorphism $\phi$ to an automorphism of the graph $B$.   We now deduce that the graph $G_1$ is  a  faithful representation of   the graph $B$. \\
 On the other hand, it is easy to see that the mapping $t : V(B) \rightarrow V(B)$, which is defined by the rule,  
$$(**) \ \ \ \  t(v,i) =  \begin{cases}
(v,0),  \ $if$ \    i=1  \\   (v,1),  \ $if$ \  i=0,  \\
 \end{cases} $$ \\     
is an automorphism of the graph $B$ of order 2. Hence, $\langle t \rangle \cong \mathbb{Z}_2. $ Also, it is easy to see that for every automorphism $\phi$ of the graph $G_1$ we have $te_{\phi}=e_{\phi}t$.   We now conclude by Theorem 3.9,    that 
 $$Aut(G \times K_2)=Aut(B) \cong Aut(G_1) \rtimes \langle t  \rangle  \cong Aut(G) \times \langle t  \rangle \cong Aut(G) \times \mathbb{Z}_2.$$

\end{proof}              
                                                                                                                                                  
                                                                                                                                                  As an application of Theorem 3.17,   we show that the Johnson graph $J(n,k)$ is a stable graph. Since $J(n,k) \cong J(n,n-k)$, in the sequel we assume that $k \leq \frac{n}{2}$. 
                                                                                                                                                    
                                                                                                                                                    \begin{thm} Let $n,k$ be positive integers with $k \leq \frac{n}{2}$. If $n\neq 6$,  then the Johnson graph $J(n,k)$ is a stable graph.

\end{thm}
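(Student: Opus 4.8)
The plan is to obtain the statement as an application of Theorem 3.18. Thus it suffices to show that, for the parameters in question, $G=J(n,k)$ is a connected non-bipartite $vd$-graph and that there is a fixed positive integer $a_0$ with $c(v,w)=|N(v)\cap N(w)|=a_0$ for every adjacent pair $v,w$ and $c(v,w)\neq a_0$ for every non-adjacent pair. I would produce such an $a_0$ explicitly; as will emerge, $a_0=n-2$ is the right choice, and it works precisely when $n\neq 2$ and $n\neq 6$.

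First I would clear the structural hypotheses. Connectedness is free, since $J(n,k)$ is distance-transitive (as recalled above) with diameter $\min\{k,n-k\}=k$. For non-bipartiteness: when $k\geq 2$ the three $k$-sets $A\cup\{a\}$, $A\cup\{b\}$, $A\cup\{c\}$ with $|A|=k-1$ and $a,b,c$ distinct elements outside $A$ form a triangle (this uses $n\geq k+2$, which follows from $n\geq 2k$ and $k\geq 2$), while $J(n,1)=K_n$ is non-bipartite for $n\geq 3$; the only bipartite Johnson graph with $k\leq n/2$ is $J(2,1)=K_2$, which has to be set aside (and is in fact not stable). For the $vd$-property, suppose $v\neq w$ and $N(v)=N(w)$. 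Then $v\not\sim w$ (else $w\in N(v)=N(w)$), so $|v\cap w|=k-j$ with $j\geq 2$. A short case analysis on $j$ — and, when $j=2$, on whether $n=k+2$ — exhibits a $k$-set obtained from $v$ by a single-element substitution that is adjacent to $v$ but not to $w$, contradicting $N(v)=N(w)$; the one genuine exception is $J(4,2)\cong K_{2,2,2}$, which is not a $vd$-graph and must likewise be excluded (or treated by hand).

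The heart of the proof is the computation of $c(v,w)$. For adjacent $v,w$, writing $v=A\cup\{a\}$, $w=A\cup\{b\}$ with $|A|=k-1$, the common neighbours are exactly the sets $A\cup\{c\}$ with $c\notin A\cup\{a,b\}$ (there are $n-k-1$ of them) together with the sets $(A\setminus\{x\})\cup\{a,b\}$ with $x\in A$ (there are $k-1$ of them), so $c(v,w)=(n-k-1)+(k-1)=n-2$ whenever $v\sim w$. If $d(v,w)=2$, write $v=C\cup\{a_1,a_2\}$, $w=C\cup\{b_1,b_2\}$ with $|C|=k-2$; a short enumeration shows the common neighbours are precisely the four sets $C\cup\{a_i,b_j\}$, so $c(v,w)=4$. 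Finally, if $d(v,w)\geq 3$ the two vertices have no common neighbour, so $c(v,w)=0$. Taking $a_0=n-2$ (a positive integer once $n\geq 3$), the hypotheses of Theorem 3.18 on $c(v,w)$ hold exactly when $n-2\notin\{0,4\}$, i.e.\ when $n\neq 2$ and $n\neq 6$. Hence for $n\neq 6$ (and away from the degenerate graphs $K_2$ and $K_{2,2,2}$) Theorem 3.18 yields $Aut(B(J(n,k)))\cong Aut(J(n,k))\times\mathbb{Z}_2$, that is, $J(n,k)$ is stable; note that for $k=1$ there are no non-adjacent pairs, so the argument shows $K_n$ is stable for every $n\geq 3$ and the restriction $n\neq 6$ is needed only when $k\geq 2$.

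I expect the main obstacle to be the exact count in the distance-$2$ case: one must verify that no common neighbour other than the four sets $C\cup\{a_i,b_j\}$ can occur, which means tracking how an arbitrary common neighbour meets $C$, $\{a_1,a_2\}$ and $\{b_1,b_2\}$ simultaneously. A secondary, more bookkeeping-type difficulty is the $vd$-verification in the tight range $n=k+2$ together with the isolation of the two exceptional graphs $J(2,1)$ and $J(4,2)$.
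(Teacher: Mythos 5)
Your proposal follows essentially the same route as the paper's proof: verify that $J(n,k)$ is a connected non-bipartite $vd$-graph, compute $c(v,w)=n-2$ for adjacent pairs, $c(v,w)=4$ at distance two and $0$ at distance at least three, and invoke Theorem 3.18 with $a_0=n-2$, which succeeds exactly when $n-2\neq 4$. The one substantive difference is your isolation of $J(2,1)=K_2$ and $J(4,2)\cong K_{2,2,2}$ (the latter has distinct vertices with equal neighborhoods, so it is not a $vd$-graph and the hypothesis of Theorem 3.18 fails); the paper glosses over this with \lq\lq{}it is easy to see that $G$ is a $vd$-graph\rq\rq{}, so your extra care is a genuine sharpening rather than a defect of your argument.
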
                                                                                                                                                           

\begin{proof} We know that the vertex set of the graph $J(n,k)$ is the set of $k$-subsets of $[n]=\{ 1,2,3,...,n \}$ in which two vertices $v$ and $w$ are adjacent if and only if $|v\cap w|=k-1$.   If $k=1$, then $J(n,k) \cong K_n$, the complete graph on $n$ vertices. It is easy to see that if $X=K_n$,  then the bipartite double cover of $X$ is isomorphic with the bipartite Kneser graph $H(n,1)$. From Theorem 3.9 (or [18]),  we know that $Aut(H(n,1)) \cong Sym([n]) \times \mathbb{Z}_2 \cong Aut(K_n) \times  \mathbb{Z}_2$. Hence the Johnson graph $J(n,k)$ is a stable graph when $k=1$. We now assume that $k \geq 2$. We let $G=J(n,k)$. It is easy to see that 
$G$ is an irreducible graph.
 It can be shown that if $v,w$ are vertices in $G$, then 
$d(v,w)=k-|v \cap w|$ [2]. Hence,  $G$ is a connected graph. 
It is easy to see that the girth of the Johnson graph $J(n,k)$ is 3.  Therefore, $G$ is a non-bipartite graph. 
  It is clear that when $d(v,w) \geq 3$, then $v,w$ have no common neighbors. We now consider two other possible cases, that is, (i) $d(v,w)=2$ or (ii) $d(v,w)=1$. Let $c(v,w)$ denote  the number of common neighbors of $v,w$ in $G$. In the sequel, we show that if $d(v,w)=2$, then $c(v,w)=4$, and if $d(v,w)=1$, then $c(v,w)=n-2$. \

 (i) If $d(v,w)=2$, then $|v\cap w|=k-2$. Let $v \cap w=u$. Then $v=u \cup \{i_1,i_2  \}$, 
$w=u \cup \{j_1,j_2  \}$, where $i_1,i_2,j_1,j_2 \in [n]$, $\{i_1,i_2  \} \cap \{j_1,j_2  \}=\emptyset$. Let $x \in V(G)$. It is easy to see that if $|x\cap u| < k-2$, then $x$ can not be a common neighbor of 
$v,w$. Hence, if $x$ is a common neighbor of $v$ and $w$,  then $x$ is of the form $x=u\cup \{r,s  \}$, where $r \in \{ i_1,i_2 \}$ and $s \in \{ j_1,j_2 \}$. We now deduce that the number of common neighbors of $v$ and $w$ in the graph $G$ is 4. \

(ii) We now assume that $d(v,w)=1$. Then $|v\cap w|=k-1$. Let $v \cap w=u$. Then $v=u \cup \{r\}$, 
$w=u \cup \{s \}$, where $r,s \in [n]$, $r \neq s$. Let $x \in V(G)$. It is easy to see that if $|x\cap u| < k-2$, then $x$ can not be a common neighbor of 
$v,w$. Hence, if $x$ is a common neighbor of $v$  and  $w$,  then $|x \cap u|=k-1$ or $|x \cap u|=k-2$. In the first step, we assume that  $|x \cap u|=k-1$. Then $x$ is of the form $x=u\cup \{ y\}$, where $y \in [n]-(v\cup w)$. Since, $|v\cup w|=k+1$, then the number of such $x\rq{}s$ is $n-k-1$.\newline
We now assume that  $|x \cap u|=k-2$. Hence, $x$ is of the form $x=t \cup \{ r,s \}$, where $t$ is a $(k-2)$-subset of the $(k-1)$-set $u$. Therefore the number of such $x\rq{}s$ is ${k-1} \choose{k-2}$=$k-1$.\
Our argument follows that if $v$ and $w$ are adjacent, then we have $c(v,w)=n-k-1+k-1=n-2$. \

 Noting that $n-2 \neq 4$, we conclude from Theorem 3.18 that the Johnson graph $J(n,k)$ is a stable graph  when $n \neq 6$.

\end{proof}
Although, Theorem 3.18,  does not say anything about the stability of the Johnson graph $J(6,k)$, we show by the next result that this graph is a stable graph.                                                                                                                                                             
                                                                                                                                                             
                                                                                                                                                             \begin{prop} The Johnson graph $J(6,k)$ is a stable graph.

                                                                                                                                                                \end{prop}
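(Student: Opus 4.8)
The plan is to argue by cases on $k$, using $J(6,k)\cong J(6,6-k)$ to reduce to $k\in\{1,2,3\}$. In every case $G:=J(6,k)$ is a connected non-bipartite $vd$-graph, so $B:=B(G)=G\times K_2$ is a connected bipartite $vd$-graph with parts $U=V(G)\times\{0\}$ and $W=V(G)\times\{1\}$ (this is verified exactly as in the proof of Theorem 3.18), and $Aut(G)\times\mathbb{Z}_2\leq Aut(B)$; the proposition is the reverse inclusion. The case $k=1$ (where $J(6,1)\cong K_6$) is handled just as the $k=1$ case in the proof of Theorem 3.19: $B(K_6)\cong H(6,1)=G(6,1,5)$, so Theorem 3.11 with $6=1+5$ gives $Aut(B(K_6))\cong Sym([6])\times\mathbb{Z}_2\cong Aut(K_6)\times\mathbb{Z}_2$. (Equivalently, Theorem 3.18 applies to $K_6$ with $a_0=4$, its non-adjacency hypothesis being vacuous.)

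Now let $k\in\{2,3\}$. Here Theorem 3.18 is not directly available, because in $J(6,k)$ a pair of adjacent vertices has $n-2=4$ common neighbours and so does a pair at distance $2$ (this is the computation in the proof of Theorem 3.19, and it is exactly why $n=6$ is excluded there). So instead I take $G_1=(U,E_1)$ to be the natural copy of $G$ on $U$, i.e. $(v,0)\sim(w,0)$ in $G_1$ iff $v\sim w$ in $G$, so that $G_1\cong G$, and I claim $G_1$ is an attached graph to $B$. Granting this, Theorem 3.10 applies with the side-swap $t\in Aut(B)$, $t(v,i)=(v,i^c)$, which carries $U$ to $W$ and commutes with every extension $e_f$ ($f\in Aut(G_1)$); hence $Aut(B)\cong Aut(G_1)\rtimes\langle t\rangle\cong Aut(G_1)\times\mathbb{Z}_2\cong Aut(J(6,k))\times\mathbb{Z}_2$, so $J(6,k)$ is stable. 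By Remark 3.6 the only point to prove is condition (ii) of Definition 3.5: if $g\in Aut(B)$ and $g(U)=U$, then $g|_U$ is an automorphism of $G_1$.

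To set this up, write $g_U=g|_U$ (and note $g(W)=W$). Since $g$ preserves $B$-distances and the $U$-vertices at $B$-distance $2$ from $(v,0)$ are exactly $\{(w,0):d_G(v,w)\in\{1,2\}\}$, the permutation $g_U$ preserves the relation $d_G(v,w)\leq 2$. When $k=3$, $J(6,3)$ is antipodal with antipodal classes $\{v,v^c\}$, and these are precisely the $U$-pairs having no common $B$-neighbour (they lie at $B$-distance $4$), so $g_U$ also preserves the antipodal perfect matching. Thus in both cases condition (ii) reduces to a single statement: \emph{$g_U$ cannot map a $J(6,k)$-edge onto a pair at $J(6,k)$-distance $2$}.

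This last reduction is the heart of the matter, and I expect it to be the main obstacle. It is genuinely delicate because $J(6,2)=T(6)$ is strongly regular with $\lambda=\mu=4$ (and $J(6,3)$ shows the same coincidence between its distance-$1$ and distance-$2$ intersection numbers), so every ``first-order'' $B$-invariant of a pair $\{(v,0),(w,0)\}$ — the number of common neighbours, the number of common neighbours of those common neighbours, the number of shortest connecting walks, the number of vertices at distance $3$ from both — takes the same value for a $J$-edge as for a $J$-distance-$2$ pair; moreover $Aut(G)$ already acts transitively on each of the two families of pairs in $U$, so one truly needs a $B$-graph-theoretic property that holds for all $J$-edges and fails for all $J$-distance-$2$ pairs. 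The approach I would pursue exploits the fact that the four common neighbours of a $J(6,k)$-edge span, inside $J(6,k)$, a \emph{disconnected} graph (a triangle plus an isolated vertex when $k=2$; two disjoint edges when $k=3$), whereas the four common neighbours of a distance-$2$ pair span a $4$-cycle, and then tries to convert this difference into a quantity that survives the passage to $B$ — the most promising candidate being the number of $6$-cycles of $B$ through $\{(v,0),(w,0)\}$ (the lower ``local'' counts having been checked to coincide). Verifying that such a count really separates the two cases is a finite but careful computation in $J(6,k)$, and this is where the arithmetic special to $n=6$ must be pinned down; once it is done, condition (ii) holds and Theorem 3.10 completes the proof as above.
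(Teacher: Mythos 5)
Your setup is exactly the paper's: reduce to $k\in\{2,3\}$, take $G_1$ on $U$ isomorphic to $J(6,k)$, verify Definition 3.5, and finish with the swap $t$ and Theorem 3.10, so the whole proof hinges on condition (ii), i.e.\ on exhibiting a property expressible purely inside $B=J(6,k)\times K_2$ that holds for every pair $\{(v,0),(w,0)\}$ with $v\sim w$ in $J(6,k)$ and fails for every pair with $d(v,w)=2$. This is precisely the step you do not carry out: you correctly observe that all the obvious invariants coincide (both kinds of pairs have four common neighbours, etc.), you identify the right structural difference in $G$ (the common neighbourhood of an edge induces a triangle plus an isolated vertex, that of a distance-$2$ pair induces a $4$-cycle), and you then \emph{propose} that counting $6$-cycles of $B$ through the pair might separate the two cases, explicitly deferring the verification (``once it is done, condition (ii) holds''). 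As written, the separation is only a conjecture with a candidate invariant, so condition (ii) is not established, Theorem 3.10 cannot be invoked, and the proof is incomplete; the difficulty you flag as ``the heart of the matter'' is exactly the content of the proposition.

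For comparison, the paper closes this gap with a concrete device: for a pair $a=(v,0)$, $b=(w,0)$ it forms $X(a,b)=\{a,b\}\cup N(a,b)\cup t(N(a,b))$, where $N(a,b)$ is the set of common neighbours in $B$ and $t$ is the swap, and shows (on one representative pair of each type, then transported to all pairs using the distance-transitivity of $J(6,2)$ together with the extensions $e_f$) that the induced subgraph $\langle X(a,b)\rangle$ contains a vertex of degree $0$ exactly when $v\sim w$; applying $g$ to this configuration then forces $g|_U$ to preserve adjacency. This is essentially your triangle-plus-isolated-vertex versus $4$-cycle observation made usable inside $B$: adjoining $t(N(a,b))$ restores, within $B$, the adjacency structure among the common neighbours that is invisible when one looks only at $N(a,b)$ (which lies entirely in one part). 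Had you pushed your structural observation in this direction -- or actually performed and reported the finite $6$-cycle count you propose, for both $k=2$ and $k=3$ -- your argument would be complete; as it stands the decisive computation is missing.
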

                                                                                                                                                                 
                                                                                                                                                                 \begin{proof} When $k=1$ the assertion is true, and  hence we assume that $k\in \{ 2,3 \}$. In the first step we show that the  Johnson graph $J(6,2)$ is a stable graph. Let $B=J(6,2) \times K_2$. We show that $Aut(B) \cong Sym([6]) \times \mathbb{Z}_2$, 
                                                                                                                                                                 where $[6]= \{ 1,2,...,6  \}$. It is clear that  $B$ is a bipartite irreducible   graph. Let $V=V(B)$ be the vertex-set of the graph $B$. Then $V=V_0 \cup V_1$, where $V_i= \{(v,i) | v\subset [6], |v|=2  \}$, $i \in \{ 0,1\}$. Let $G_1=(V_0,E_1)$ be the graph with the vertex-set $V_0$  in which two vertices $(v,0),(w,0)$ are adjacent whenever $|v \cap w|=1$. It is clear that $G_1$ is isomorphic with the Johnson graph $J(6,2)$. Hence,    we have $Aut(G_1) \cong Sym([6])$.                                                                                                                                                             
                                                                                                                                                                 We show that $G_1$ is  a  faithful representation  of    the graph $B$. By what is seen in (*) of  the proof of Theorem 3.17,  it is clear that if $h$ is an automorphism of the graph $G_1$, then $h$ can be extended to an automorphism $e_h$ of the graph $B$. Thus, the condition (i) of Definition 3.4,  holds for the graph $G_1$.  
                                                                                                                                                                 
                                                                                                                                                                   Let $a=(v,0)$ and $b=(w,0)$ be two  adjacent vertices in  the 
                                                                                                                                                                 graph $G_1$,    that is,  $|v\cap w|=1$. Let $N(a,b)$ denote  the set of common neighbors of $a$ and $b$ in the graph $B$. Let $X(a,b)=\{a,b\}\cup N(a,b) \cup t(N(a,b))$, where $t$ is the automorphism of the graph $B$  defined by the rule $t(v,i)=(v,i^c)$, $i^c \in \{ 0,1  \}, i^c\neq i$. Let $\langle X(a,b)  \rangle $ be the subgraph induced by the set $X(a,b)$ in the graph $B$. It can be shown that if 
                                                                                                                                                                  $a,b$ are adjacent vertices in $G_1$, that is,  $|v\cap w|=1$, then $ \langle X(a,b) \rangle $ has a vertex of degree $0$. On the other hand, when $a,b$ are  not 
                                                                                                                                                                 adjacent vertices in $G_1$, that is,  $|v\cap w|=0$, then $\langle X(a,b) \rangle$ has no vertices of degree $0$. In the
                                                                                                                                                                 rest of the proof, we let $\{ x,y \}=xy$.  For example, let $r=(12,0)$ and $s=(13,0)$ be two 
                                                                                                                                                                 adjacent vertices of $G_1$. Then $X(r,s)=\{(12,0),(13,0),(14,1),(15,1),(16,1),(23,1),(14,0),(15,0),(16,0),(23,0)  \}$. Now,  in the graph $\langle X(r,s)  \rangle$ the vertex $(23,0)$ is a vertex of degree 0. Whereas, if we let 
$r=(12,0)$, $u=(34,0)$, then $r,u$ are not adjacent in the graph $G_1$. Then $X(r,u)=\{ (12,0),(34,0),(13,1),(14,1),(23,1),(24,1),(13,0),(14,0),(23,0),(24,0)\}$. Now, it is clear that the graph $\langle X(r,u) \rangle $ has no  vertices of degree 0. \\
 Note that the graph $G_1$ is isomorphic with the Johnson graph $J(6,2)$,  and hence $G_1$ is a distance-transitive graph.
    Now if $c,d$ are  two adjacent vertices in the graph $G_1$,  then there is an automorphism $f$ in $Aut(G_1)$ such that $f(r)=c$ and $f(s)=d$. Let $e_f$ be the extension of $f$ to an automorphism of the graph $B$.
     Therefore,  $\langle X(c,d)  \rangle = \langle X(e_f(r),e_f(s)) \rangle =e_f(\langle X(r,s)\rangle)$ has   a vertex  of degree $0$.
      This argument also shows that if $p,q$ are non-adjacent vertices in the graph $G_1$, then $\langle X(p,q) \rangle$ has no vertices of degree $0$.\

Now, let $g$ be an automorphism of the graph $B$ such that $g(V_0)=V_0$. We show that $g|_{V_0}$ is an automorphism of the graph $G_1$. Let $a=(v,0)$ and $b=(w,0)$ be two adjacent  vertices of the 
                                                                                                                                                                 graph $G_1$,  that is,  $|v\cap w|=1$. Then $\langle X(a,b) \rangle$ has a vertex of degree 0. Hence,  $g(\langle X(a,b)\rangle)=\langle X(g(a),g(b))\rangle$ has a vertex of degree 0. Then $g(a)$ and $g(b)$ are adjacent in the graph $G_1$. We now deduce that if 
                                                                                                                                                                 $g$ is an automorphism of the graph $B$ such that $g(V_0)=V_0$, then $g|_{V_0}$ is an automorphism of the graph $G_1$. Therefore, the condition (ii) of Definition 3.4,   holds for the graph $G_1$.
                                                                                                                                                                  Therefore, $G_1$ is  a  faithful representation  of the    graph $B$.   Note that $t$ is an automorphism of the graph $B$ of order 2 such that $t(V_0)=V_1$ and $t(V_1)=V_0$. Also, we have $tf=ft, f\in Aut(G_1)$.   We now, conclude by Theorem 3.9, that; 
                                                                                                                                                                 $$Aut(B) \cong Aut(G_1)\rtimes \langle t \rangle  \cong Aut(G_1)\times \langle t \rangle  \cong  Aut(G)\times \mathbb{Z}_2 \cong Sym([6])\times \mathbb{Z}_2 $$ 
                                                                                                                                                                  Therefore, the graph $G=J(6,2)$ is a stable graph.\
                                                                                                                                                                  
                                                                                                                                                                  By a similar argument, we can show that the graph $J(6,3)$ is a stable graph.
                                                                                                                                                                   \end{proof}
                                                                                                                                                                    Combining Theorem 3.18,   and Proposition 3.19,  we obtain the following result.
                                                                                                                                                                    \begin{thm} The Johnson graph $J(n,k)$ is a stable graph.

                                                                                                                                                                     \end{thm}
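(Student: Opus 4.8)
The plan is to obtain Theorem 3.21 as an immediate synthesis of the two results that precede it, so that essentially no new argument is needed. First I would use the reduction recorded just before Theorem 3.19: since $J(n,k)\cong J(n,n-k)$ and stability is an isomorphism invariant (it is a statement relating $\mathrm{Aut}(B(G))$ and $\mathrm{Aut}(G)$, and the operation $B(\cdot)=(\cdot)\times K_2$ respects graph isomorphisms), it suffices to prove the claim under the standing assumption $k\le\frac{n}{2}$; this then yields the statement for every $k$ with $1\le k<n$.

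Next I would split into two exhaustive cases according to the value of $n$. If $n\neq 6$, then Theorem 3.19 applies directly and gives that $J(n,k)$ is a stable graph, i.e.\ $\mathrm{Aut}(B(J(n,k)))\cong\mathrm{Aut}(J(n,k))\times\mathbb{Z}_2$. If $n=6$, then Proposition 3.20 gives the same conclusion for every admissible $k$ (the cases $k=1,2,3$ being treated there, and $k=4,5$ reduced to $k=2,1$ via $J(6,k)\cong J(6,6-k)$). Since these cases cover all possibilities, $J(n,k)$ is stable in every case, which is the assertion.

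I do not expect any real obstacle at this stage: all of the substantive work has already been carried out, namely the general criterion of Theorem 3.18 (stability of a connected non-bipartite $vd$-graph whose common-neighbour counts separate edges from non-edges), its verification for Johnson graphs in Theorem 3.19 --- where the counts $c(v,w)=4$ for vertices at distance $2$ and $c(v,w)=n-2$ for adjacent $v,w$, together with the fact that $n-2\ne 4$ precisely when $n\ne 6$, constitute the key computation --- and the separate, more hands-on treatment of $J(6,2)$ and $J(6,3)$ in Proposition 3.20 via the induced subgraph $\langle X(a,b)\rangle$ and its vertices of degree $0$. The only point requiring any care is to confirm that Theorem 3.19 and Proposition 3.20 together genuinely exhaust all pairs $(n,k)$ after the reduction to $k\le\frac{n}{2}$; once the sub-case $k=1$ (where $J(n,k)\cong K_n$ and $B(K_n)$ is a bipartite Kneser graph, handled inside both statements) is recognised as covered, this is immediate.
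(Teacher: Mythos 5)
Your proposal is correct and matches the paper's own treatment: Theorem 3.21 is obtained exactly by combining Theorem 3.19 (the case $n\neq 6$) with Proposition 3.20 (the case $n=6$), after the standing reduction to $k\le \frac{n}{2}$ via $J(n,k)\cong J(n,n-k)$. No new argument is needed beyond this case split, just as you describe.
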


                                                                                                                                                                       \section{Conclusion}
                                                                                                                                      In this paper, we gave a method for finding the automorphism groups  of  connected bipartite  irreducible  graphs (Theorem 3.8, and Theorem 3.9).  Then  by our method, we explicitly
 determined  the automorphism groups of some classes of bipartite irreducible graphs, including the graph $S(q,n,k)$ which is a derived graph from the Grassman graph $G(q,n,k)$ (Theorem 3.15). Also, we provided a sufficient ascertainable condition such that  when
  a connected non-bipartite irreducible  graph $G$ satisfies this condition, then $G$ is a  stable graph (Theorem 3.17). 
                                                                                                                                      Finally, we showed that the Johnson graph $J(n,k)$ is a stable graph (Theorem 3.20).
 
 \section{ Acknowledgements}
The author is thankful to the anonymous referee  for his/her valuable comments and suggestions.

\end{document}